\newcommand{\R}{\mathbb R}
\newcommand{\E}{\mathbb E}
\newcommand{\prob}{\mathbb P}
\newcommand{\e}{\mathrm e}
\renewcommand{\phi}{\varphi}
\renewcommand{\epsilon}{\varepsilon}
\renewcommand{\tilde}{\widetilde}
\newtheorem{thm}{Theorem}
\newtheorem{lem}[thm]{Lemma}
\theoremstyle{remark}
\newtheorem*{rem}{Remark}
\theoremstyle{definition}
\title{The Langevin Monte Carlo algorithm 
in the non-smooth log-concave case}
\author{Joseph Lehec}
\begin{document}
\maketitle

\begin{abstract}
We prove non-asymptotic polynomial bounds on the 
convergence of the Langevin Monte Carlo algorithm
in the case where the potential is a convex function 
which is globally Lipschitz on its domain, typically 
the maximum of a finite number of affine functions on
an arbitrary convex set. In particular the 
potential is not assumed to be gradient Lipschitz,
in contrast with most existing works 
on the topic.
\bigskip \\
\textit{Keywords:} Statistical Sampling, 
Markov Chain Monte Carlo, Convexity.\\
\textit{MS Classification:} 62D05 (68W20, 65C05, 52A23) 
\end{abstract}

\section{Introduction}

\paragraph{Setting.}
Sampling from a high-dimensional log-concave probability
measure is a problem dating back to the early nineties 
and the seminal work of Dyer, Frieze and Kannan~\cite{DFK}
and which has many applications to various fields 
such as Bayesian statistics, convex optimization and statistical inference. 
This problem is always addressed via Markov Chain 
Monte Carlo methods, but there is a large variety of those: 
Metropolis-Hastings type random walks (ball 
walk), Glauber like dynamics (hit and run) or Hamiltonian Monte Carlo.  
In this article, we will consider the so-called Langevin algorithm, 
which is defined as follows. 
Given a probability measure $\mu$ on $\R^n$ we let 
$\phi$ be its potential, namely $\mu$ has density 
$\e^{-\phi}$ with respect to the Lebesgue measure. The 
Langevin diffusion associated to $\mu$ is the solution $(X_t)$ 
of the following stochastic differential equation 
\begin{equation}\label{eq:langevin-diff}
dX_t = d B_t - \frac 12 \nabla \phi ( X_t) \, dt  ,
\end{equation}
where $(B_t)$ is a standard $n$-dimensional Brownian motion. 
The Langevin algorithm is the Euler scheme associated 
to this diffusion: Given a time step parameter $\eta$ 
we let $(\xi_k)_{k\geq 1}$ be a sequence of i.i.d. 
centered Gaussian vectors with covariance $\eta \, \text{Id}$ and set 
\begin{equation}\label{eq:langevinalg}
x_{k+1} = x_k + \xi_{k+1} - \frac \eta2 \nabla \phi ( x_k ) . 
\end{equation}
We shall focus on the \emph{log-concave} case, namely the case 
where the potential $\phi$ is convex. One originality 
of this work is that we will consider the \emph{constrained}
case, allowing the measure $\mu$ 
to be supported on a set $K$ different from $\R^n$. 
In other words the potential $\phi$ is allowed to take the
value $+\infty$ outside some set $K$. Notice that 
the log-concavity assumption implies that $K$ is convex. 
In the constrained case the Langevin 
diffusion~\eqref{eq:langevin-diff} becomes 
\begin{equation}
\label{eq:tanaka-diff}
d X_t = d B_t - \frac 12 \nabla \phi ( X_t ) \, dt - d \Phi_t ,
\end{equation}
where $(\Phi_t)$ is a process that repels $(X_t)$ inward when it 
reaches the boundary of $K$, see the next section for a 
precise definition. The discretization then becomes 
\begin{equation}\label{eq:PLMC}
x_{k+1} = \mathcal P \left( x_k + \xi_{k+1} 
- \frac \eta 2 \nabla \phi (x_k) \right) ,
\end{equation}
where $\mathcal P$ is the projection on $K$: 
For $x\in \R^n$ the point $\mathcal P_K (x)$ is the 
closest point to $x$ in $K$. This is the algorithm 
we will study throughout the article. It 
was first introduced in our joint paper 
with Bubeck and Eldan~\cite{BEL} and to the best of our knowledge 
it has not been investigated since. 

The second originality of this work is that we 
do not assume the potential $\phi$ to be smooth. 
More precisely we will assume 
that the gradient of $\phi$ (or rather its subdifferential)
is uniformly bounded on $K$, but we do not assume it to be Lipschitz
or even continuous. Let us point out that this is by no means an exotic situation, 
the reader could think for instance of $\phi$ 
being the maximum of a finite number of affine functions on $K$. 
We do not make any assumption whatsoever on the convex set $K$. 

The drawback of this very generic situation and of our approach
is that we are only able to get convergence estimates in Wasserstein distance. 
Recall that the Wasserstein distance $W_2$ between 
two probability measures $\mu$ and $\nu$ is defined as 
\[
W_2^2 ( \mu , \nu ) = \inf_{X \sim \mu , Y\sim \nu} \left\{ \E [ \vert X - Y \vert^2 ]\right\}. 
\] 
By a slight abuse of notation, if $X,Y$ are random vectors we also write $W_2 (X,Y)$ for the 
Wasserstein distance between the law of $X$ and that of $Y$.
\medskip

\paragraph{Main results.}
Our main result is the following bound between the 
Langevin algorithm~\eqref{eq:PLMC} after $k$ steps
and its corresponding point $X_{k\eta}$ in the true Langevin 
diffusion~\eqref{eq:tanaka-diff}.
\begin{thm}\label{thm:main}
Assume that $\mu$ is log-concave, with globally Lipschitz potential $\phi$ 
on its support $K$ and let $L$ be the Lipschitz constant.
Assume that the time step $\eta$ satisfies $\eta < n L^{-2}$ 
and suppose that the Langevin algorithm and 
diffusion are initiated at the same point
$x_0$. Then for every integer $k$ we have  
\begin{equation}\label{eq:main1}
\frac 1n  W_2^2 ( X_{k\eta} , x_k ) 
\leq A\,  k \, \eta^{3/2} 
\end{equation}
where 
\begin{equation}\label{eq:defA}
A = (2\e^{1/2}+1) \frac {(1+\sigma_0)  (n+2\log k)^{1/2}}{r_0} + \frac 76\, \frac{L}{n^{1/2}} , 
\end{equation}
and 
\begin{equation}\label{eq:sigmar}
r_0= d(x_0,\partial K) \quad \text{and} \quad 
\sigma_0 = \frac 1n \left( \phi (x_0) - \inf_K \{ \phi (y) \} \right) .  
\end{equation}
\end{thm}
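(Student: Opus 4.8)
The plan is to realise the diffusion \eqref{eq:tanaka-diff} and the chain \eqref{eq:PLMC} on one probability space, driven by the \emph{same} Brownian motion, and to bound $\E\,|X_{k\eta}-x_k|^2$, which dominates $W_2^2(X_{k\eta},x_k)$. It is convenient to interpolate the chain in continuous time by freezing the drift on each interval and projecting onto $K$: for $t\in[k\eta,(k+1)\eta)$ set $\tilde X_t=\mathcal P_K\big(x_k+(B_t-B_{k\eta})-\tfrac{t-k\eta}2\nabla\phi(x_k)\big)$, so that $\tilde X$ agrees with the Markov chain on the grid. The error will then be reduced, using the contractivity of the true diffusion and the $1$-Lipschitzness of $\mathcal P_K$, to a sum of one-step contributions, organised so that one adds the \emph{squares} of the one-step errors rather than the errors themselves.

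\paragraph{Convexity and contraction.}
The only structural input is convexity, so it must do the work a curvature or gradient-Lipschitz hypothesis would usually do. Since $\nabla\phi$ is monotone ($\phi$ convex) and the Skorokhod reflection at $\partial K$ is monotone ($K$ convex), two solutions of \eqref{eq:tanaka-diff} driven by the same Brownian motion satisfy $\tfrac{d}{dt}|X_t-X_t'|^2\le 0$ pathwise; hence the semigroup of the constrained diffusion is non-expansive in $W_2$. Together with $|\mathcal P_K(a)-\mathcal P_K(b)|\le|a-b|$ this lets me compare $X_{(k+1)\eta}$ with the diffusion restarted from $x_k$, absorb the previously accumulated error without amplification, and reduce matters to the one-step errors $\varepsilon_j$ — the discrepancy, under a shared Brownian increment, between the diffusion run for time $\eta$ from $x_j$ and one step of the algorithm from $x_j$; a careful accounting of the cross terms then gives a bound of the form $\tfrac1n W_2^2(X_{k\eta},x_k)\lesssim \tfrac1n\sum_{j=0}^{k-1}\E\,|\varepsilon_j|^2$.

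\paragraph{One-step estimate.}
Writing $z=x_j$, letting $(Y_t)$ solve \eqref{eq:tanaka-diff} from $z$ and setting $x^+=\mathcal P_K(z+B_\eta-\tfrac\eta2\nabla\phi(z))$, one uses $Y_\eta\in K$ and the $1$-Lipschitzness of $\mathcal P_K$ to obtain
\[
|\varepsilon_j|=|Y_\eta-x^+|\le\tfrac12\int_0^\eta|\nabla\phi(Y_t)-\nabla\phi(z)|\,dt+|\Phi_\eta| ,
\]
$\Phi_\eta$ being the reflection accumulated over the step. The drift defect is at most $L\eta$ by $|\nabla\phi|\le L$ on $K$; in the non-smooth setting no better per-step bound is available, and the point is that after squaring, summing over the $k$ steps and dividing by $n$ this contributes $\lesssim\tfrac1n\,kL^2\eta^2$, which is $\le\tfrac L{\sqrt n}\,k\eta^{3/2}$ precisely because $\eta<nL^{-2}$ — this is where the step-size restriction is used, and it produces the term $\tfrac76\,L n^{-1/2}$ in \eqref{eq:defA}. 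The reflection term $\Phi_\eta$ vanishes unless the relevant path meets $\partial K$ during the step; its contribution is controlled by comparison with the unconstrained Brownian path and a Gaussian maximal inequality, the competition being between the distance of the current iterate to $\partial K$ and the per-step fluctuation scale $\sqrt{(n+2\log k)\eta}$, the $2\log k$ arising from a union bound over the $k$ steps so that the estimate holds all along the trajectory. Feeding this requires propagating along the chain a lower bound on $d(x_j,\partial K)$ and an upper bound on $\phi(x_j)-\inf_K\phi$, and bounding the displacement of the diffusion over the horizon through an energy estimate in terms of $n$ and of $\phi(x_0)-\inf_K\phi=n\sigma_0$; this is how $r_0$ and $\sigma_0$ enter the first term of \eqref{eq:defA}.

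\paragraph{Main obstacle.}
The delicate part is the reflection. The hard wall has to be controlled \emph{uniformly along the whole trajectory}, not merely over a single step, with the sharp dependence on $r_0$ and on the dimension; moreover the reflection terms of the true diffusion and of the interpolated chain should be matched against one another — they largely cancel in the coupling — rather than estimated separately. Making this quantitative and extracting the explicit constant $A$ of \eqref{eq:defA} is the technical heart of the proof. By contrast the non-smoothness of $\phi$ costs nothing beyond the hypothesis $\eta<nL^{-2}$, and everything else is coupling and bookkeeping organised around the $W_2$-contraction of \eqref{eq:tanaka-diff}.
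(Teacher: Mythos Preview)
Your strategy of restarting the diffusion from $x_j$ at each step and telescoping via the $W_2$-contraction has a genuine gap. Pathwise contraction gives $|X_{(j+1)\eta}-x_{j+1}|\le |X_{j\eta}-x_j|+|\varepsilon_j|$, hence $|X_{k\eta}-x_k|\le\sum_{j<k}|\varepsilon_j|$ and, after squaring, $\E|X_{k\eta}-x_k|^2\le k\sum_{j<k}\E|\varepsilon_j|^2$. The extra factor $k$ is fatal for the claimed rate, and the assertion that ``careful accounting of the cross terms'' removes it is not justified: the cross term $2\langle X_{(j+1)\eta}-Y^{(j)}_\eta,\varepsilon_j\rangle$ involves a vector $\varepsilon_j$ whose conditional mean given $\mathcal F_{j\eta}$ is nonzero (both the drift defect and the reflection contribute a one-signed part), so no martingale cancellation is available. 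A second, independent problem is your treatment of the boundary: the one-step reflection $\Phi^{(j)}_\eta$ is governed by $d(x_j,\partial K)$, and since the projection step can land $x_j$ exactly on $\partial K$, there is no lower bound on $d(x_j,\partial K)$ to propagate; a restarted diffusion from a boundary point accrues local time of order $\sqrt\eta$ in time $\eta$, which after squaring and summing gives a contribution of order $k\eta$, not $k\eta^{3/2}$.

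The paper avoids both difficulties by \emph{not} restarting. One compares the true diffusion $(X_t)$, run once from $x_0$, with the \emph{unprojected} interpolation $\widetilde X_t=x_i+(B_t-B_{i\eta})-\tfrac{t-i\eta}{2}\nabla\phi(x_i)$ on $[i\eta,(i+1)\eta]$, using only that $|X_{(i+1)\eta}-x_{i+1}|\le|X_{(i+1)\eta}-\widetilde X_{(i+1)\eta}|$. Differentiating $|X_t-\widetilde X_t|^2$ and using monotonicity of $\nabla\phi$ together with $\langle X_t-x_i,d\Phi_t\rangle\ge 0$ (since $x_i\in K$) kills the dangerous cross terms and leaves only $\langle \widetilde X_t-x_i,\cdot\rangle$, i.e., the small increment $B_t-B_{i\eta}-\tfrac{t-i\eta}{2}\nabla\phi(x_i)$ paired against $(\nabla\phi(X_t)-\nabla\phi(x_i))\,dt+2\,d\Phi_t$. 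This yields directly
\[
\E|X_{(i+1)\eta}-x_{i+1}|^2\le \E|X_{i\eta}-x_i|^2+\text{(error)}_i,
\]
so the errors sum \emph{linearly} with no extra $k$. The reflection then enters only through the local time $\ell_{k\eta}$ of the \emph{original} diffusion, which is controlled in terms of the \emph{initial} data $r_0,\sigma_0$ via an It\^o computation on $|X_t-x_0|^2$ (this is where $(1+\sigma_0)/r_0$ appears), and it is paired with $\max_{i\le k}|\xi_i|$, whose second moment is bounded by $\e(n+2\log k)\eta$. This is the mechanism producing the first summand of $A$; your proposed route through $d(x_j,\partial K)$ and per-step reflections does not reach it.
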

\begin{rem}
The transport cost $W_2^2$ behaves additively 
when taking tensor products, so the Wasserstein distance between any 
two probability measures on $\R^n$ is typically of order $\sqrt n$.
Therefore $\frac 1n W_2^2$ is of order $1$, which explains why we wrote the 
theorem this way. The reader should thus have in mind that the theorem provides some
non trivial information as soon as the right-hand side of~\eqref{eq:main1} 
is smaller than some small constant $\epsilon$. 
\end{rem}
The result depends on the starting point via the parameters 
$r_0$ and $\sigma_0$. In order to get a meaningful bound the algorithm 
should not be initiated too close to the boundary of $K$ or at a point 
where the potential $\phi$ is
too large. Let us also point out that the theorem is also valid when 
there is no support constraint, namely when $K=\R^n$. One just replaces 
$r_0$ by $+\infty$, so that $A = O ( L n^{-1/2} )$ in this case. 
Let us comment also on the parameter $\sigma_0$. Obviously $\sigma_0 =0$ 
if the potential is minimal at $x_0$.
Fradelizi's theorem~\cite[Theorem 4]{frad} asserts that if $\mu$ is log-concave on $\R^n$ 
with density $f$ and has its barycenter at $x_0$ then 
\[
\sup_{x\in \R^n} \{ f(x) \} \leq e^n f(x_0) . 
\]
In terms of the parameter $\sigma_0$ this means that if $\mu$ has its 
barycenter at $x_0$ then $\sigma_0 \leq 1$. Since $\phi$ is assumed to 
be Lipschitz with constant $L$, if $x_0$ is at $O(nL^{-1})$ distance from the barycenter then 
again $\sigma_0$ is order $1$. In general we shall think of $\sigma_0$ 
as a parameter of order $1$. Also we are never going to take more than 
$poly(n)$ steps so $\log k$ will always be negligible 
compared to $n$. Under the previous assumptions the parameter $A$ 
thus satisfies
\[
A = O \left(  \max \left( \frac{ n^{1/2} } {r_0} ; \frac{ L }{ n^{1/2} } \right) \right) . 
\]

In order to estimate the complexity of the Langevin algorithm, 
we need to combine the previous theorem with some estimate 
on the speed of convergence of the Langevin diffusion $(X_t)$ 
towards its equilibrium measure $\mu$. For this we shall 
use two functional inequalites, the Poincar\'e inequality 
and the logarithmic Sobolev inequality. 
Recall that the measure $\mu$ is said to satisfy the logarithmic Sobolev inequality
if for every probability measure $\nu$ on $\R^n$ we have 
\begin{equation}\label{eq:logsob}
D ( \nu \mid \mu ) \leq \frac {C_{LS}} 2 \, I ( \nu \mid \mu ) 
\end{equation}
where $D ( \nu \mid \mu )$ and $I(\nu\mid \mu)$ denote respectively
the relative entropy and the relative Fisher information of 
$\nu$ with respect to $\mu$: 
\[
D ( \nu \mid \mu ) = \int_{\R^n} \log \left( \frac{d\nu}{d\mu} \right) \, d\nu 
\quad \text{and} \quad 
I ( \nu \mid \mu ) = \int_{\R^n} \left\vert
\nabla \log \left( \frac{d\nu}{d\mu} \right) \right\vert^2  \, d\nu . 
\]
The smallest constant $C_{LS}$ for which~\eqref{eq:logsob} holds true 
is called the log-Sobolev constant of $\mu$. 
The factor $\frac12$ is just a matter of convention, with this 
normalization the log-Sobolev constant of the
standard Gaussian measure is $1$, in any dimension. It is well-known that the 
log-Sobolev inequality is stronger than the Poincar\'e inequality. 
More precisely, letting $C_P$ be the best constant 
in the Poincar\'e inequality: 
\[
\mathrm{var}_\mu (f) \leq C_P \int_{\R^n} \vert \nabla f \vert^2 \, d\mu ,  
\]
we have $C_P \leq C_{LS}$. 
\begin{thm}\label{thm:logsob}
Again assume that $\mu$ is log-concave with 
globally Lipschitz potential on its support, with Lipschitz 
constant $L$. Let $x_0$ be a point in the support of $\mu$ 
and recall the definition~\eqref{eq:sigmar} of $\sigma_0$ and $r_0$. 
Assume in addition that the measure $\mu$ satisfies the log-Sobolev
inequality with constant $C_{LS}$. 
Then after $k$ steps of the Langevin algorithm 
started at $x_0$ with time step parameter $\eta<nL^{-2}$ we have 
\[
\frac 1n W_2^2 ( x_k , \mu ) 
\leq 2 B \, \e^{-k\eta / 2C_{LS}} + 2 A \, k\eta^{3/2} 
\]
where again $A$ is given by~\eqref{eq:defA} and 
\[
B =  4 C_{LS} \left( 1 + \log \left( \frac{ \max ( C_{LS} ,1 )\, n }{ \min ( r_0 , 1 ) } \right)   
+ \sigma_0 + \frac L n \right) .
\]
\end{thm}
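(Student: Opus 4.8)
The plan is to bound the distance from the diffusion to equilibrium and then combine it with Theorem~\ref{thm:main} by the triangle inequality. Since $W_2$ is a distance, $W_2(x_k,\mu)\le W_2(x_k,X_{k\eta})+W_2(X_{k\eta},\mu)$, and squaring with $(a+b)^2\le 2a^2+2b^2$ gives
\[
\frac 1n W_2^2(x_k,\mu)\le \frac 2n W_2^2(x_k,X_{k\eta})+\frac 2n W_2^2(X_{k\eta},\mu)\le 2A\,k\eta^{3/2}+\frac 2n W_2^2(X_{k\eta},\mu),
\]
so it suffices to show $\frac 1n W_2^2(X_{k\eta},\mu)\le B\,\e^{-k\eta/2C_{LS}}$.

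For this I would use the log-Sobolev hypothesis twice. First, by the Otto--Villani theorem, $\mu$ satisfies Talagrand's transport-entropy inequality with the same constant, $W_2^2(\nu,\mu)\le 2C_{LS}D(\nu\mid\mu)$ for every $\nu$. Second, writing $\mu_t$ for the law of $X_t$, the relative entropy dissipates along~\eqref{eq:tanaka-diff}: the usual integration by parts (the boundary term being killed by the Neumann-type reflection, which is legitimate because $K$ is convex) gives $\frac{d}{dt}D(\mu_t\mid\mu)=-\tfrac12 I(\mu_t\mid\mu)$, whence by~\eqref{eq:logsob}, $D(\mu_t\mid\mu)\le \e^{-(t-t_0)/C_{LS}}D(\mu_{t_0}\mid\mu)$ for all $t\ge t_0\ge 0$. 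Since $\e^{-(k\eta-t_0)/C_{LS}}\le \e^{-k\eta/2C_{LS}}$ whenever $k\eta\ge 2t_0$, it is then enough to exhibit some $t_0$ with $\frac 1n D(\mu_{t_0}\mid\mu)\le 2\bigl(1+\log(\max(C_{LS},1)n/\min(r_0,1))+\sigma_0+L/n\bigr)$; the small values of $k$ with $k\eta<2t_0$ are handled separately, the inequality being then essentially trivial because $2B\e^{-k\eta/2C_{LS}}$ already dominates a crude bound on $\frac1nW_2^2(X_{k\eta},\mu)$ deduced from $\E|X_{k\eta}-x_0|^2=O(nt_0)$ and the sub-Gaussianity of $\mu$ implied by~\eqref{eq:logsob}.

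The core of the argument is therefore the entropy estimate at the reference time, and this is the step I expect to be the main obstacle, since the algorithm and the diffusion both start from the Dirac mass at $x_0$, for which $D(\cdot\mid\mu)=+\infty$; one must quantify the regularising effect of the noise on $[0,t_0]$. I would write $D(\mu_{t_0}\mid\mu)=-H(\mu_{t_0})+\E[\phi(X_{t_0})]$ with $H$ the differential entropy (recall $\int\e^{-\phi}=1$) and estimate the two terms. For the entropy: $-H(\mu_{t_0})\le\log\|f_{t_0}\|_\infty$ where $f_{t_0}$ is the density of $X_{t_0}$, and, provided $t_0\le r_0^2$ so that $B(x_0,r_0)\subseteq K$ keeps the reflection from piling up mass (the reflected copies of $x_0$ sit at distance $\gtrsim r_0\gtrsim\sqrt{t_0}$), a comparison with the free Gaussian of covariance $t_0\,\mathrm{Id}$, the bounded drift $-\tfrac12\nabla\phi$ being absorbed into a factor $\e^{O(L^2 t_0)}$ by Girsanov, gives $-\tfrac1n H(\mu_{t_0})\le-\tfrac12\log(2\pi t_0)+O(1+L^2t_0/n)$. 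For the potential: $\phi$ is $L$-Lipschitz on $K$ and $X_{t_0}\in K$, so $\E[\phi(X_{t_0})]\le\phi(x_0)+L\,\E|X_{t_0}-x_0|\le\phi(x_0)+O(L\sqrt{nt_0})$ by a routine moment bound for~\eqref{eq:tanaka-diff}; writing $\phi(x_0)=n\sigma_0+\inf_K\phi$ one then controls $\inf_K\phi=-\log\|f\|_\infty$ via the Hensley--Ball bound $\|f\|_\infty\ge c^n(\det\mathrm{Cov}(\mu))^{-1/2}$ and $\mathrm{Cov}(\mu)\preceq C_{LS}\,\mathrm{Id}$ (which is the Poincaré inequality together with $C_P\le C_{LS}$), giving $\inf_K\phi\le\tfrac n2\log C_{LS}+O(n)$. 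Adding the pieces,
\[
\frac 1n D(\mu_{t_0}\mid\mu)\le -\tfrac12\log(2\pi t_0)+\tfrac12\log C_{LS}+\sigma_0+O\!\left(1+\frac{L^2t_0}{n}+L\sqrt{t_0/n}\right),
\]
and choosing $t_0$ of order $\min(C_{LS},r_0^2,1/n)$ makes the logarithmic part collapse to $O(\log(\max(C_{LS},1)n/\min(r_0,1)))$ and the remainder $O(1+L/n)$, which is within the required bound. The delicate points are all concentrated here: it is where the geometric parameter $r_0$ enters (the noise must have had room to spread before meeting $\partial K$, which is also where convexity of $K$ is essential so that reflection does not concentrate the density), and where the offset $\sigma_0$ appears, the term $\inf_K\phi$ being tamed only through the interplay of the Lipschitz bound, log-concavity and the Poincaré inequality.
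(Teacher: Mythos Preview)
Your high-level strategy (triangle inequality, Theorem~\ref{thm:main} for the discretization error, Otto--Villani transport inequality plus exponential entropy decay along the semigroup) is exactly the paper's. The difference lies in how the Dirac initial condition is handled, and here the paper takes a substantially simpler route than yours.

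Rather than bounding $D(\mu_{t_0}\mid\mu)$ directly via heat-kernel/density estimates, the paper exploits the $W_2$-contraction of the semigroup $(P_t)$, which follows from the convexity of $\phi$ via parallel coupling: for any probability measure $\nu$,
\[
W_2^2(\delta_{x_0}P_t,\mu)\le 2\,W_2^2(\delta_{x_0}P_t,\nu P_t)+2\,W_2^2(\nu P_t,\mu)\le 2\,W_2^2(\delta_{x_0},\nu)+4C_{LS}\,\e^{-t/C_{LS}}D(\nu\mid\mu).
\]
Choosing $\nu=\mu$ conditioned on the ball $B(x_0,\delta)$ makes the first term at most $2\delta^2$ and gives $D(\nu\mid\mu)=-\log\mu(B(x_0,\delta))$, which is estimated elementarily from $\phi(x_0)\le\min_K\phi+n\sigma_0$, the bound $\min_K\phi\le S(\mu)\le\tfrac n2\log(2\pi\e\,C_{LS})$ (Shannon entropy dominated by the Gaussian of the same covariance), the Lipschitz bound on $\phi$, and the volume of the $\delta$-ball. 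Optimizing in $\delta$ (with $\delta$ depending on $t$, of order $\e^{-t/2C_{LS}}$) yields the stated $B$ and also removes the need for a separate treatment of small $k\eta$.

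Your approach is plausible in outline, but the step where you pass from Girsanov to a bound on $-H(\mu_{t_0})$ or $\|f_{t_0}\|_\infty$ is more delicate than the sketch suggests: Girsanov controls the Radon--Nikodym derivative on path space, not the time-$t_0$ marginal density, so extracting a pointwise or entropy bound requires an additional argument, and with reflection one would need a Neumann heat-kernel estimate on the convex domain $K$. The paper's trick sidesteps all of this by never looking at the density of $\mu_{t_0}$ at all; the $W_2$-contraction lets one trade the Dirac mass for a measure with explicitly computable entropy \emph{before} running the dynamics. This is the idea you are missing, and it is what buys the clean constants without any PDE input.
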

\begin{rem}
Note that we initiate the Langevin algorithm at a Dirac 
point mass, we do not need any \emph{warm start} hypothesis. 
The starting point only plays a role through the parameters $r_0$ 
and $\sigma_0$. 
\end{rem}
Let us describe what the theorem gives in terms of 
the complexity of the Langevin algorithm. Say we want 
$\frac 1n W_2^2 ( x_k ,\mu ) \leq \epsilon$ for some 
small $\epsilon$. The first term of the right-hand side 
is a little bit intricate, so let us assume that all parameters 
of the problem are at most polynomial in the dimension. Then 
that term is just $poly (n) \exp (- k \eta / 2 C_{LS} )$, which 
is negligible as soon as $k\eta = \Omega ( C_{LS} \log n )$.   
Let us also assume that $\sigma_x = O(1)$ (see the discussion above). 
Then the theorem shows that choosing
\[
\eta = \Theta^* \left( \frac {\epsilon^2}{C_{LS}^2} \min \left( \frac {r_0^2} n , 
\frac n{L^2} \right)\right)
\] 
and running the algorithm for   
\[
k  = \Theta^* \left( \frac{ C_{LS}^3 }{ \epsilon^2 } \max \left( \frac n {r_0^2} 
; \frac {L^2} n \right) \right)   
\]
steps produces a point $x_k$ satisfying $\frac 1n W_2^2 ( x_k , \mu ) \leq \epsilon$. 
The notation $\Theta^*$ hides universal constants as well
as possible $polylog(n)$ dependencies, this is a common practice in this field. 

Note in particular that if we treat all parameters other than the dimension as constants, 
then we already get a non trivial information after 
a number of steps of the algorithm which is nearly linear in the dimension.  

Of course not every log-concave measure satisfies log-Sobolev, simply 
because log-Sobolev implies sub-Gaussian tails. 
However there are a number of interesting cases in which the log-Sobolev 
inequality is known to hold true, which we list below. 
\begin{enumerate} 
\item If the potential $\phi$ is $\alpha$-uniformly convex for 
some $\alpha>0$, in the sense that $x\mapsto \phi(x) - \frac \alpha 2 \vert x\vert^2$ 
is convex, then $\mu$ satisfies $\log$-Sobolev
with constant $1/\alpha$. This is the celebrated Bakry-\'Emery criterion, see~\cite{BaGL}.
See also~\cite{BL} for an alternate proof based on the Pr\'ekopa-Leindler inequality.
\item If $\mu$ is log-concave and is supported on a 
ball of radius $R$, then $\mu$ satisfies log-Sobolev with constant $R^2$, up to a universal 
constant. This follows trivially from E. Milman's result that, 
within the class of log-concave measures, 
Gaussian concentration and the log-Sobolev inequality are equivalent, 
see \cite[Theorem 1.2.]{milman-duke}, or \cite[Theorem 2]{ledoux-semigroup}.
\item If $\mu$ is log-concave, supported on a ball of radius $R$ and isotropic,
in the sense that its covariance matrix is the identity matrix, 
then Lee and Vempala~\cite{LV} have shown
that $\mu$ satisfies log-Sobolev with constant $R$, up to a universal 
factor.
Note that the isotropy condition implies that $R\geq \sqrt n$, so this improves
greatly upon the previous result in the isotropic case. 
\end{enumerate}
In the first case, notice that since the potential is at the same time 
globally Lipschitz and uniformly convex, the support of $\mu$ must be bounded. 
Actually if the potential is globally Lipschitz it cannot grow fast enough at infinity
to insure log-Sobolev. So if we insist on assuming that the potential is Lipschitz 
and on using log-Sobolev then we have to assume that the support is bounded. 

One way around this issue is to use the Poincar\'e inequality 
rather than log-Sobolev. Indeed every log-concave measure 
satisfies the Poincar\'e inequality. 
Kannan, Lovasz and Simonovits~\cite{KLS} proved that the 
Poincar\'e constant of an isotropic log-concave measure 
on $\R^n$ is $O(n)$ and conjectured that it should actually 
be bounded. This conjecture, which was the major open problem in the field 
of asymptotic convex geometry, was recently nearly solved by Yuansi Chen~\cite{chen}, 
who proved an $n^{o(1)}$ bound for the Poincar\'e constant of an isotropic 
log-concave vector in dimension $n$. The result of Chen relies 
on a technique invented by Eldan~\cite{eldan} which was also used by
Lee an Vempala~\cite{LV} to prove a $O ( \sqrt n )$ bound for the KLS constant, 
as well as the aforementioned log-Sobolev result. 
Recall that if $\nu$ is a probability measure, absolutely continuous with 
respect to $\mu$, the chi-square divergence of $\nu$ with respect to $\mu$ 
is defined as 
\[
\chi^2 ( \nu\mid \mu ) = \int_{\R^n} \left( \frac {d\nu}{d\mu} -1 \right)^2 \, d\mu . 
\]
Our next theorem then states as follows. 
\begin{thm}\label{thm:poincare}
Assume that $\mu$ is a log-concave probability measure with globally Lipschitz potential 
$\phi$ on its domain, with constant $L$. Then after $k$ steps of the Langevin 
algorithm initiated at a random point $x_0$ taking values in the domain, 
and with time step parameter $\eta$ satisfying $\eta \leq n L^{-2}$, we have
\[
\frac 1n W_2^2 (x_k,\mu) 
\leq \frac 4n \, C_P \chi^2 ( x_0 \mid \mu ) \e^{ - k \eta / C_P } +  2 \, A  k \eta^{3/2} .    
\]
where $C_P$ is the Poincar\'e constant of $\mu$ and where  
\[
A = (2\e^{1/2}+1) (n+2\log k)^{1/2} \, \E \left[ \frac{ 1+\sigma_0 }{ r_0 } \right] 
+ \frac 76 \, \frac{L}{n^{1/2}} .
\]
Note that $r_0$ and $\sigma_0$ are random here.
\end{thm}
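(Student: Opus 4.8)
The plan is to compare the algorithm to its equilibrium by interpolating with the true diffusion. Let $(X_t)$ denote the solution of~\eqref{eq:tanaka-diff} driven by the same Brownian motion and the same increments $(\xi_k)$ as the algorithm~\eqref{eq:PLMC} and started at the same random point $x_0$, and write $\nu_0=\mathrm{law}(x_0)$. By the triangle inequality
\[
W_2^2(x_k,\mu)\le 2\,W_2^2(x_k,X_{k\eta})+2\,W_2^2(X_{k\eta},\mu);
\]
the first term is the discretisation error, to be handled by Theorem~\ref{thm:main}, and the second is the convergence of the diffusion to equilibrium, to be handled by the Poincar\'e inequality. If $\chi^2(x_0\mid\mu)$ or $\E[(1+\sigma_0)/r_0]$ is infinite the asserted bound is vacuous, so I assume both finite; note that $\chi^2(\nu_0\mid\mu)<\infty$ together with the fact that the log-concave measure $\mu$ has finite moments of every order forces $\nu_0$ to have finite second moment, which is what makes the Wasserstein arguments below meaningful.

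For the discretisation error I would condition on $x_0$. Conditionally on $\{x_0=x\}$ with $x$ in the domain, both the algorithm and the diffusion start from the deterministic point $x$, so Theorem~\ref{thm:main} applies and gives
\[
\tfrac1n\,W_2^2\big(\mathrm{law}(X_{k\eta}\mid x_0=x),\,\mathrm{law}(x_k\mid x_0=x)\big)\le A(x)\,k\eta^{3/2},
\]
with $A(x)=(2\e^{1/2}+1)\frac{(1+\sigma(x))(n+2\log k)^{1/2}}{d(x,\partial K)}+\frac76\frac{L}{n^{1/2}}$ and $\sigma(x)=\frac1n(\phi(x)-\inf_K\phi)$. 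Integrating against $\nu_0(dx)$ and invoking the convexity of the squared transport cost under mixtures — concretely, gluing the conditional optimal couplings into a single coupling of $\mathrm{law}(X_{k\eta})$ and $\mathrm{law}(x_k)$ — yields $\frac1n W_2^2(X_{k\eta},x_k)\le \E[A(x_0)]\,k\eta^{3/2}$, and $\E[A(x_0)]$ is exactly the constant $A$ of the statement.

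For the convergence to equilibrium I would first record the exponential decay of the chi-square divergence. Let $P_t$ be the Markov semigroup of~\eqref{eq:tanaka-diff}; it is reversible with respect to $\mu$ and its Dirichlet form is $\frac12\int|\nabla f|^2\,d\mu$. With $h_t=P_t(d\nu_0/d\mu)$ one has $\chi^2(\mathrm{law}(X_t)\mid\mu)=\|h_t-1\|_{L^2(\mu)}^2$, and differentiating in $t$ and applying the Poincar\'e inequality to $h_t-1$ gives $\chi^2(\mathrm{law}(X_t)\mid\mu)\le\chi^2(x_0\mid\mu)\,\e^{-t/C_P}$. Then I would pass from $\chi^2$ to $W_2$ by an Otto-calculus argument: the metric speed of the curve $t\mapsto\mathrm{law}(X_t)$ for the Wasserstein distance is $\tfrac12\sqrt{I(\mathrm{law}(X_t)\mid\mu)}$, so, since $W_2(X_t,\mu)\to0$ as $t\to\infty$ (ergodicity together with the uniform moment control afforded by the $\chi^2$ bound), one obtains $W_2(X_T,\mu)\le\tfrac12\int_T^\infty\sqrt{I(\mathrm{law}(X_t)\mid\mu)}\,dt$. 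Combining the entropy–dissipation identity $\frac{d}{dt}D(\mathrm{law}(X_t)\mid\mu)=-\tfrac12 I(\mathrm{law}(X_t)\mid\mu)$, the elementary inequality $D\le\chi^2$, and the decay just obtained, a Cauchy--Schwarz inequality with an exponential weight optimised over $\beta\in(0,1/C_P)$ bounds $\int_T^\infty\sqrt{I(\mathrm{law}(X_t)\mid\mu)}\,dt$ and gives $W_2^2(X_{k\eta},\mu)\le 2\,C_P\,\chi^2(x_0\mid\mu)\,\e^{-k\eta/C_P}$. Inserting the two estimates into the triangle inequality and dividing by $n$ produces the theorem.

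The one genuinely substantial point is this last chi-square-to-Wasserstein passage, which relies on the gradient-flow (Otto) structure of~\eqref{eq:tanaka-diff}: that $(\mathrm{law}(X_t))$ is an absolutely continuous curve in Wasserstein space with the stated metric speed, and the entropy–dissipation identity, remain valid in spite of the non-smoothness of $\phi$ and the presence of the constraint set $K$ precisely because $D(\cdot\mid\mu)$ is geodesically convex for $\mu$ log-concave, and I would quote these facts from the construction of~\eqref{eq:tanaka-diff}. Everything else — the conditional use of Theorem~\ref{thm:main}, the measurability of the conditional couplings, the convexity of $W_2^2$, and the harmless reduction to the case where the right-hand side is finite — is routine.
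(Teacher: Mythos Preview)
Your overall strategy is the same as the paper's: split via the triangle inequality, handle the discretisation error by conditioning on $x_0$ and applying Theorem~\ref{thm:main}, and handle the convergence to equilibrium by combining the exponential decay of $\chi^2$ along the semigroup with a $\chi^2$--to--$W_2$ bound.

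The one genuine difference is in that last passage. The paper does not use Otto calculus at all here: it simply quotes Liu's transportation--variance inequality $W_2^2(\nu,\mu)\le 2C_P\,\chi^2(\nu\mid\mu)$, valid for any $\nu$ whenever $\mu$ satisfies Poincar\'e, and applies it to $\nu=\nu_0 P_{k\eta}$. Your metric-speed argument, when carried through (Cauchy--Schwarz with weight $e^{\beta t}$, integration by parts via the entropy--dissipation identity, optimisation at $\beta=1/(2C_P)$), does reproduce exactly the bound $W_2^2(X_{k\eta},\mu)\le 2C_P\,\chi^2(x_0\mid\mu)\,e^{-k\eta/C_P}$, so the constants match. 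But it relies on the curve $t\mapsto \mathrm{law}(X_t)$ being absolutely continuous in $W_2$ with metric speed $\tfrac12\sqrt{I(\mathrm{law}(X_t)\mid\mu)}$ and on the identity $\frac{d}{dt}D=-\tfrac12 I$ holding for the \emph{reflected} diffusion with \emph{non-smooth} $\phi$. You say you would ``quote these facts from the construction of~\eqref{eq:tanaka-diff}'', but the paper's Section~2 does not establish them, and they are not entirely routine in this setting. The paper's route via Liu's inequality avoids this issue completely, since that inequality needs only Poincar\'e for $\mu$ and nothing about the dynamics. So your argument is not wrong, but it trades a clean citation for a technical burden you have not discharged.
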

So the price we have to pay for using Poincar\'e rather than 
log-Sobolev is a warm start hypothesis: the 
algorithm must be initiated at a random point $x_0$ whose 
chi-square divergence to $\mu$ is finite. In the unconstrained case, 
namely when $\mu$ is supported on the whole $\R^n$, 
a natural choice for a warm start is an appropriate Gaussian
measure. One can indeed get the following estimate. 
\begin{lem}\label{lem:chi2}
Suppose $\mu$ is log-concave, supported on the whole $\R^n$,
with globally Lipschitz potential, with Lipschitz constant $L$. 
Let $\gamma$ be the Gaussian measure centered at a point $x_0$ 
and with covariance $\frac n {L^2} Id$. Then 
\[
\log \chi^2 ( \gamma \mid \mu ) \leq n (1+\sigma_0) 
+ \frac n2 \log \left( \frac{L^2 C_P}{n} \right) ,
\]
where $C_P$ is the Poincar\'e constant of $\mu$.   
\end{lem}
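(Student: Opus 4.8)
The plan is to rewrite $1+\chi^2(\gamma\mid\mu)$ as an explicit Gaussian integral that can be controlled using only that $\phi$ is $L$-Lipschitz, and then to bound the leftover $\phi(x_0)$ term using the definition of $\sigma_0$ together with two classical facts relating $\inf\phi$ to the Poincar\'e constant. First I would record the elementary identity $1+\chi^2(\gamma\mid\mu)=\int_{\R^n}(d\gamma/d\mu)^2\,d\mu=\int_{\R^n}g^2\,\e^{\phi}$, where $g(x)=(2\pi\sigma^2)^{-n/2}\e^{-|x-x_0|^2/(2\sigma^2)}$ is the density of $\gamma$ and $\sigma^2=n/L^2$. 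Plugging in the Lipschitz bound $\phi(x)\le\phi(x_0)+L|x-x_0|$ and substituting $y=x-x_0$ gives
\[
1+\chi^2(\gamma\mid\mu)\le(2\pi\sigma^2)^{-n}\,\e^{\phi(x_0)}\int_{\R^n}\e^{-|y|^2/\sigma^2+L|y|}\,dy .
\]
Using the pointwise estimate $L|y|\le\frac{|y|^2}{2\sigma^2}+\frac{L^2\sigma^2}{4}=\frac{|y|^2}{2\sigma^2}+\frac n4$ (AM--GM, recalling $L^2\sigma^2=n$) the remaining integral becomes the Gaussian normalisation $\int\e^{-|y|^2/(2\sigma^2)}\,dy=(2\pi\sigma^2)^{n/2}$, and one gets
\[
1+\chi^2(\gamma\mid\mu)\le(2\pi\sigma^2)^{-n/2}\,\e^{\phi(x_0)+n/4}=\Big(\tfrac{L^2}{2\pi n}\Big)^{n/2}\e^{\phi(x_0)+n/4}.
\]

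It remains to bound $\phi(x_0)$. By the definition~\eqref{eq:sigmar} of $\sigma_0$ one has $\phi(x_0)=\inf_{\R^n}\phi+n\sigma_0$, so it suffices to bound $\inf\phi$. Here I would invoke two standard facts. Writing $\Sigma$ for the covariance matrix of $\mu$, the Gaussian maximum-entropy inequality gives
\[
\inf\phi\le\int_{\R^n}\phi\,d\mu=-\int_{\R^n}\e^{-\phi}\log\e^{-\phi}\le\tfrac12\log\big((2\pi\e)^n\det\Sigma\big),
\]
while testing the Poincar\'e inequality against the linear form $x\mapsto\langle x,v\rangle$, with $v$ a unit top eigenvector of $\Sigma$, gives $\lambda_{\max}(\Sigma)\le C_P$, hence $(\det\Sigma)^{1/n}\le\lambda_{\max}(\Sigma)\le C_P$ and $\tfrac12\log\det\Sigma\le\tfrac n2\log C_P$. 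Combining these, $\inf\phi\le\tfrac n2\log(2\pi\e C_P)$, so $\phi(x_0)\le n\sigma_0+\tfrac n2\log(2\pi\e C_P)$.

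Plugging this back in and taking logarithms, and using $\log\chi^2\le\log(1+\chi^2)$,
\[
\log\chi^2(\gamma\mid\mu)\le\frac n2\log\frac{L^2}{2\pi n}+n\sigma_0+\frac n2\log(2\pi\e C_P)+\frac n4=n\sigma_0+\frac{3n}{4}+\frac n2\log\frac{L^2C_P}{n},
\]
which is at most $n(1+\sigma_0)+\frac n2\log(L^2C_P/n)$ since $3n/4\le n$. The only non-trivial inputs are the two classical facts used to control $\inf\phi$ by $C_P$; everything else is a direct Gaussian computation. The point to be careful about is that $\sigma_0$ is defined through $\inf\phi$ whereas the entropy identity only gives $\int\phi\,d\mu\ge\inf\phi$, so the chain must run ``$\inf\phi\le$ (entropy) $\le\tfrac12\log((2\pi\e)^n\det\Sigma)$'' and not in the opposite direction.
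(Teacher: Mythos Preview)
Your argument is essentially identical to the paper's: both compute $\int g^2\e^{\phi}$, use the Lipschitz bound on $\phi$, complete the square, and then control $\phi(x_0)$ via $\inf\phi\le S(\mu)\le\frac n2\log(2\pi\e C_P)$. The only slip is the AM--GM constant: the correct bound is $L|y|\le\frac{|y|^2}{2\sigma^2}+\frac{L^2\sigma^2}{2}=\frac{|y|^2}{2\sigma^2}+\frac n2$ (not $n/4$), which turns your $3n/4$ into $n$ and recovers the lemma with equality rather than slack.
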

In particular when $\sigma_0 = O (1)$ and 
all other parameters of the problem 
are at most polynomial in $n$, we get 
$\log \chi_2^2 ( \gamma \mid \mu ) \leq O ( n \log n )$. 
With this choice of a warm start, 
and observing that in the unconstrained case the 
parameter $A$ is just $O ( L / \sqrt n )$, 
the previous theorem gives 
$\frac 1n W_1^2 ( x_k , \mu )\leq \epsilon$
after 
\[
k = \Theta^* \left( \frac{C_P^3 L^2 n^2}{\epsilon^2} \right) 
\]
steps, with $\eta$ chosen appropriately. 
Also in the constrained case, one can get  
a non trivial complexity estimate from Theorem~\ref{thm:poincare}
by choosing the uniform measure on a ball contained in the support 
as a warm start. We leave this annoying computation to the reader. 
 
Finally let us point out that it is also possible 
to obtain interesting bounds from our result 
when the potential is not globally Lipschitz, 
simply by restricting the measure to a large ball. 
For simplicity let us only spell out the argument when 
the measure $\mu$ is supported on the whole $\R^n$ 
and when we have a linear control on the gradient of the 
potential, but the method could give non trivial bounds in 
more general situations. So let $\mu$ be a log-concave measure supported 
on the whole $\R^n$, let $\phi$ be its potential, 
and consider the Langevin algorithm associated to 
the measure $\mu$ conditioned on the ball of radius $R$:
\begin{equation}\label{eq:PLMCball}
x_{k+1} = \mathcal P \left( x_k + \sqrt \eta \xi_{k+1} 
- \frac \eta 2 \nabla \phi ( x_k ) \right) , 
\end{equation}
where $\mathcal P$ is the orthogonal projection on the ball of radius $R$: 
\[
\mathcal P (x) =
\begin{cases}
x & \text{if } \vert x\vert \leq R \\
\frac{R x}{\vert x\vert} & \text{otherwise} 
\end{cases}
\]
In this special case, Theorem~\ref{thm:logsob} 
yields the following complexity for the Langevin algorithm.  
\begin{thm}\label{thm:largeball}
Assume that $\mu$ is log-concave, supported on the whole $\R^n$ 
and that the gradient of the potential $\phi$ grows at most linearly:
\[
\vert \nabla \phi (x) \vert \leq \beta ( \vert x\vert +1) ,
\] 
for all $x\in\R^n$ and for some $\beta >0$. 
Assume that the Langevin algorithm 
is initiated at $0$, that $\sigma_0 = O (1)$, 
that $\int \vert x\vert^2 \, d\mu = O (n)$,
and let $C_{LS}$ be the log-Sobolev constant of $\mu$, 
with the convention that it equals $+\infty$ if $\mu$ does 
not satisfy log-Sobolev. 
Then choosing $R = \Theta^* ( \sqrt n )$, 
$\eta = \Theta^* \left( \epsilon^2 \max ( \beta ,1 )^{-2} \min ( C_{LS} , n )^{-2} \right)$
and running the algorithm~\eqref{eq:PLMCball} initiated at $0$ for 
\[ 
k = \Theta^* \left( \frac { \min ( C_{LS} , n)^3 \max(\beta,1)^2 } { \epsilon^2 }  \right) 
\]
steps produces a point $x_k$ satisfying $\frac 1n W_2^2 ( x_k , \mu ) \leq \epsilon$. 
\end{thm}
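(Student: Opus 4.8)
The plan is to apply Theorem~\ref{thm:logsob} to the measure $\mu_R$, defined as $\mu$ conditioned on the Euclidean ball $B_R$ of radius $R$ centered at the origin, and then to transfer the resulting estimate back to $\mu$ through the triangle inequality for $W_2$. The measure $\mu_R$ is log-concave with support $B_R$, and on $B_R$ its potential is $\phi$, which by the hypothesis $|\nabla\phi(x)|\le\beta(|x|+1)$ is Lipschitz on $B_R$ with constant $L_R\le\beta(R+1)=O^*(\beta\sqrt n)$ once we fix $R=\Theta^*(\sqrt n)$. Moreover~\eqref{eq:PLMCball} is exactly the constrained Langevin algorithm~\eqref{eq:PLMC} associated with $\mu_R$. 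Starting from $x_0=0$ we have $r_0=d(0,\partial B_R)=R=\Theta^*(\sqrt n)$, and since $\inf_{B_R}\phi\ge\inf_{\R^n}\phi$ the parameter $\sigma_0$ relative to $\mu_R$ is bounded by the original $\sigma_0=O(1)$.

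The crucial point is to control the log-Sobolev constant of $\mu_R$. On one hand, $\mu_R$ is log-concave and supported on a ball of radius $R$, so item~2 of the list above (E.~Milman's theorem) yields $C_{LS}(\mu_R)=O(R^2)=\Theta^*(n)$. On the other hand, from $\int|x|^2\,d\mu=O(n)$ together with log-concavity one obtains the exponential tail bound $\mu(B_R^c)\le\e^{-cR/\sqrt n}$, so that taking $R=\Theta^*(\sqrt n)$ large enough forces $\mu(B_R^c)\le n^{-10}$ and in particular $\mu(B_R)\ge\frac12$. Now, if $\mu$ does satisfy log-Sobolev with constant $C_{LS}$, then by the Herbst argument it has Gaussian concentration with constant $O(C_{LS})$; since $\mu(B_R)\ge\frac12$ this concentration is inherited by $\mu_R$ up to a universal factor, and E.~Milman's equivalence (item~2 again) converts it back into $C_{LS}(\mu_R)=O(C_{LS})$. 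Combining the two bounds gives $C_{LS}(\mu_R)=\Theta^*(\min(C_{LS},n))$ in all cases (if $\mu$ fails log-Sobolev, only the first bound is used and $\min(C_{LS},n)=n$).

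Feeding the above estimates into~\eqref{eq:defA} and into the formula for $B$ in Theorem~\ref{thm:logsob} gives $A=O^*(\max(\beta,1))$ — the first term of~\eqref{eq:defA} being $O^*(1)$ since $r_0=\Theta^*(\sqrt n)$ and $\sigma_0=O(1)$, the second being $\frac76 L_R/\sqrt n=O^*(\beta)$ — while $B$ is at most $\min(C_{LS},n)$ times a polynomial in $n$, hence enters the estimate only through $\log B=O^*(1)$. Theorem~\ref{thm:logsob} then reads $\frac1n W_2^2(x_k,\mu_R)\le 2B\,\e^{-k\eta/2C_{LS}(\mu_R)}+2A\,k\eta^{3/2}$, and to make both terms at most $\epsilon/8$ it suffices that $k\eta\ge\Theta^*(\min(C_{LS},n))$ and $k\eta^{3/2}\le\Theta^*(\epsilon/\max(\beta,1))$. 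Saturating the first constraint, so that $k\eta=\Theta^*(\min(C_{LS},n))$, the second turns into $\eta\le\Theta^*\big(\epsilon^2\max(\beta,1)^{-2}\min(C_{LS},n)^{-2}\big)$, which is the asserted value of $\eta$, and then $k=\Theta^*\big(\min(C_{LS},n)^3\max(\beta,1)^2/\epsilon^2\big)$; one also checks that this $\eta$ respects the standing constraint $\eta<nL_R^{-2}=\Theta^*(\beta^{-2})$ (assuming say $\epsilon\le\min(C_{LS},n)$). To return to $\mu$, couple $\mu$ with $\mu_R$ by the identity on $B_R$ and arbitrarily on $B_R^c$, which gives $\frac1n W_2^2(\mu,\mu_R)\le\frac1n\big(2R^2\mu(B_R^c)+2\E_\mu[|x|^2\mathbf 1_{B_R^c}]\big)$; by the exponential tail estimate above this is $O^*(n^{-9})\le\epsilon/4$ for $R$ as chosen, and the triangle inequality $\frac1n W_2^2(x_k,\mu)\le\frac2n W_2^2(x_k,\mu_R)+\frac2n W_2^2(\mu_R,\mu)\le\epsilon$ concludes.

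The only genuinely non-routine ingredient is the log-Sobolev estimate above: getting $\min(C_{LS},n)$ rather than the plain ball bound $\Theta^*(n)$ relies on the fact that conditioning a log-concave measure to a large convex set does not spoil its Gaussian concentration, which I would establish via E.~Milman's characterisation of the log-Sobolev constant within the log-concave class. All the rest — the exponential tail estimate controlling $W_2(\mu,\mu_R)$, and the bookkeeping that produces the announced $\eta$ and $k$ — is routine computation.
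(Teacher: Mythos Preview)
Your proposal is correct and follows essentially the same route as the paper: apply Theorem~\ref{thm:logsob} to $\mu_R$, bound $C_{LS}(\mu_R)$ by $\Theta^*(\min(C_{LS},n))$ via E.~Milman's equivalence (ball bound for the $n$ term, inherited Gaussian concentration for the $C_{LS}$ term), and control $W_2^2(\mu,\mu_R)$ through Borell's exponential tail estimate. The bookkeeping and the final triangle-inequality step are also the same, so there is nothing to add.
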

Note in particular that in the case where $C_{LS}$ and $\beta$ 
are of constant order the complexity does not depend on the dimension.  
\medskip

\paragraph{Related works.} 
We end this introduction with a discussion on a short selection 
of related works. 
Let us first mention that as far as we know, the
Langevin algorithm with support constraint 
was only investigated in our previous paper 
with Bubeck and Eldan~\cite{BEL}. In this paper 
the potential was assumed to be gradient Lipschitz.  
In all the works that we could find on the 
Langevin Monte Carlo algorithm the potential is always assumed to be 
smooth, most of the time gradient Lipschitz. 
This hypothesis is somewhat relaxed in the recent article~\cite{CDJB}, 
but the authors analyze the Langevin algorithm for a smoothed out approximation of $\mu$, 
and in any case they still require the gradient of the potential to be H\"older
continuous. The present work appears to be the first were
$\nabla \phi$ is allowed to be discontinuous. 

Let us give the state of the art convergence bounds for 
the Langevin algorithm in the smooth, unconstrained case. 
The first quantitative result appears to be 
Dalalyan's article~\cite{dalalyan}. The 
result is in total variation distance rather than Wasserstein
but as in the present work the strategy consists in writing 
\begin{equation}\label{eq:triangle}
TV ( x_k , \mu ) \leq TV ( x_k , X_{k\eta} ) + TV ( X_{k\eta} , \mu  ) 
\end{equation}
and estimating both terms separately. 
His assumption is that the potential $\phi$ satisfies 
\[
\alpha Id \leq \nabla^2 \phi \leq \beta Id
\] 
pointwise on the whole $\R^n$, where $\alpha$ and $\beta$
are positive constants. Actually a closer look at his argument shows that 
he does not really use log-concavity. Indeed, his
main contribution is a bound
for the relative entropy of the Langevin algorithm at time $k$ with 
respect to the corresponding point in the Langevin diffusion. 
That part of the argument 
is a nice application of Girsanov's formula and does not use log-concavity at all,   
only the fact that $\nabla \phi$ is Lipschitz is needed. 
Dalalyan only uses strict log-concavity to estimate how fast 
the diffusion $(X_t)$ converges to $\mu$. But that only 
requires Poincar\'e for an exponentially fast decay in chi-square divergence
or log-Sobolev for a decay in relative entropy. Dalalyan's theorem can thus 
be rewritten as follows: if $d\mu = \e^{-\phi} \,dx $ is supported on the whole $\R^n$, 
if $\nabla \phi$ is Lipschitz with constant $\beta$ and if $\mu$
satisfies the log-Sobolev inequality with constant $C_{LS}$ then 
after $k$ steps of the Langevin algorithm with times step parameter $\eta$ 
we have 
\[
TV ( x_k , \mu ) 
\leq D ( x_0 \mid \mu )^{1/2} \e^{- k\eta / 2 C_{LS}} 
+\beta n^{1/2} ( 1 + \E [ \sigma_0 ] )^{1/2}  k^{1/2} \eta  , 
\]
where again $\sigma_0 = \frac 1n \left( \phi(x_0) - \min_{\R^n} \{ \phi \} \right)$. 
The result depends on a warm start hypothesis, the algorithm must be initiated from a 
random point $x_0$ whose relative entropy to the target measure is finite. On the other 
hand, it is not hard to see that one can find a Gaussian measure whose relative 
entropy to $\mu$ is $O^* (n)$. As a result, it follows from the previous bound that 
if $\eta$ is chosen appropriately then one has $TV ( x_k  ,\mu ) \leq \epsilon$ after 
\[
k = \Theta^* \left( \frac { C_{LS}^2 \beta^2 n } {\epsilon^2 } \right) 
\]
steps of the algorithm. 

Durmus and Moulines~\cite{DM} have the same set of hypothesis 
as Dalalyan but they prove a result in Wasserstein distance rather than total variation. 
As opposed to Dalayan they really 
use the hypothesis $\nabla ^2 \phi \geq \alpha Id$ for some positive $\alpha$.  
Also their approach is a bit different from that of Dalalyan:  
instead of bounding $W_2 ( x_k, X_{k\eta} )$ and $W_2 ( X_{k\eta} , \mu)$ separately 
they directly obtain a recursive inequality for $W_2 ( x_k , \mu )$. 
Their approach essentially yields the following result: 
Suppose that $\alpha Id \leq \nabla^2 \phi \leq \beta Id$ pointwise on the whole $\R^n$
for some positive constants $\alpha,\beta$. Assume also that the time step parameter 
$\eta$ satisfies $\eta \leq \frac 1 {2\beta}$. Then 
\begin{equation}\label{eq:DM}
W_2 ( x_k , \mu ) 
\leq \left(1 - \frac {\alpha\eta} 2\right)^k W_2 ( x_0 , \mu ) 
+ \frac {2\beta} \alpha n^{1/2} \eta^{1/2} . 
\end{equation}
Actually, the result of Durmus and Moulines is a bit more involved, for a (short) proof 
of that very statement see~\cite[Theorem~1.1]{dalalyan2}. 

This result implies that $\frac 1n W_2^2 ( x_k , \mu ) \leq \epsilon$
after a number of steps $k = O^* \left( \frac{\beta^2}{\alpha^3\epsilon} \right)$,
with time step parameter of order $\epsilon \alpha^2 / \beta^2$. 
This should be compared to the complexity given by Theorem~\ref{thm:largeball}
in this case. Indeed, observe that the hypothesis $\alpha Id \leq \nabla^2 \phi \leq \beta Id$ 
implies that the log-Sobolev constant is $1/\alpha$ at most and that $\nabla \phi$ 
grows linearly. Therefore Theorem~\ref{thm:largeball} applies, and it gives 
the following complexity: $k=\Theta^* \left( \frac{\beta^2}{\alpha^3 \epsilon^2} \right)$.
The dependence in $\epsilon$ is thus worse ($\epsilon^2$ rather $\epsilon$) 
but the dependence in the other parameters is the same, which is quite remarkable 
given the fact that Theorem~\ref{thm:largeball} holds 
under considerably weaker assumptions. 

Lastly, let us also mention Vempala and Wibisono's work~\cite{VW} whose approach is 
similar in spirit to that of Durmus and Moulines but gives 
a result closer to Dalalyan's. They prove that if 
$\nabla \phi$ is Lipschitz with constant $\beta$, 
if $\mu$ satisfies log-Sobolev with constant $C_{LS}$,
and if the time step parameter satisfies $\eta \leq 1 / (4C_{LS}\beta^2)$ 
then after $k$ steps of the algorithm one has  
\begin{equation}\label{eq:VW}
D ( x_k \mid \mu ) \leq \e^{- k\eta / C_{LS} } D( x_0 \mid \mu ) 
+ 8 n \beta^2 C_{LS} \eta .
\end{equation}
As in the result of Dalalyan the measure $\mu$ is not assumed to be log-concave, 
only log-Sobolev is required. Let us note that this result 
recovers Dalalyan's by Pinsker's inequality.
Let us also remark that combining it with the transport inequality 
$W_2^2 ( x_k , \mu ) \leq 2C_{LS} \,D( x_k \mid \mu )$,
which is a consequence of log-Sobolev, one gets 
\[
W_2^2 ( x_k , \mu ) \leq 2 C_{LS} D(x_0\mid \mu) \e^{- k\eta / C_{LS} } 
+ 16 \, n \, \beta^2 C_{LS}^2 \eta . 
\]
This pretty much recovers~\eqref{eq:DM} under a weaker hypothesis: Log-Sobolev rather than uniform 
convexity of the potential, with two caveats: The hypothesis on the time step 
parameter is a bit more restrictive, and this is from a warm start 
in the relative entropy sense. 
\medskip

\paragraph{Acknowledgments.} The author is grateful to S\'ebastien Bubeck 
and Ronen Eldan for a number of useful discussions related to this work.
We are also grateful to Andre Wibisono who brought to our attention
the $W_2$/chi-square inequality used in the proof of Theorem~\ref{thm:poincare}. 
In the first version of the paper the formulation of that theorem was slightly weaker, 
with $W_1$ in place $W_2$. 

\section{The Langevin diffusion with reflected boundary condition}

In section we define properly the Langevin diffusion with reflection
at the boundary of $K$:
\[
d X_t = d B_t - \frac 12 \nabla \phi(X_t) - d \Phi_t .  
\]
Recall that $d\mu = \e^{-\phi} dx$ is assumed to be log-concave,  
which means that $\phi\colon \R^n \to \R\cup\{+\infty\}$ is convex. 
Actually, it will be slightly more convenient 
for our purposes to assume that the domain of $\phi$ is the whole $\R^n$ and that the measure $\mu$ 
is given by $\mu(dx) = \mathbf 1_K (x) \e^{-\phi(x)} \, dx$ where $K$ is a convex subset of 
$\R^n$ with non empty interior. Since we do not assume the potential $\phi$ to be 
everywhere differentiable, the expression $\nabla \phi ( X_t)$ needs to be clarified. 
Let us agree on the convention 
that in the sequel $\nabla \phi (x)$ stands for the element of the subdifferential 
of $\phi$ at point $x$ whose Euclidean norm is minimal. Since $\phi$ is assumed to 
be a convex function whose domain is the whole $\R^n$, for every $x\in\R^n$ the subdifferential 
of $\phi$ at $x$ is a non empty closed convex set, so that the Euclidean norm does uniquely attain 
its minimum on this set.  


According to Tanaka~\cite[Theorem~3.1]{tanaka}, given a continuous semi-martingale $(W_t)$
taking values in $\R^n$ and satisfying $W_0\in K$, 
there exists a unique couple $(X_t,\Phi_t)$ of continuous semi-martingales such that, almost surely 
\begin{enumerate}
\item $X_t \in K$ for all $t\in\R_+$,
\item $X_t = W_t - \Phi_t$ for all $t\in\R_+$,
\item $(\Phi_t)$ is of the form  $\Phi_t = \int_0^t \nu_s \, d\ell_s$ where $\ell$ is a measure on 
$\R_+$ which is finite on bounded intervals and supported on the set 
$\{t\in\R_+\colon X_t\in\partial K\}$, and for any such $t$ the vector 
$\nu_t$ is an outer unit normal to the boundary of $K$ at $X_t$.  
\end{enumerate} 
In words the process $(\Phi_t)$ is a finite variation and continuous process 
which repels $(X_t)$ inwards when it reaches the boundary of $K$. In the sequel we shall say that the 
process $(X_t)$ is the \emph{reflection} of $(W_t)$ at the boundary of $K$ and that 
the process $(\Phi_t)$ is \emph{associated} to $(X_t)$. 
The process $(\ell_t)$ is called the \emph{local time} of $(X_t)$ at the boundary of $K$. 

Now given a standard Brownian motion $(B_t)$ and a starting point $x\in K$, 
we want to argue that there exists a unique process $(X_t)$ such that 
$(X_t)$ is the reflection at the boundary of $K$ of the semi-martingale 
\[
x + B_t - \frac 12 \int_0^t \nabla \phi (X_s)\, ds . 
\] 
In other words we want 
\[
d X_t = d B_t -  \frac 12 \nabla \phi (X_t)\, dt - d \Phi_t ,  
\]
where $(\Phi_t)$ is associated to $(X_t)$. This is a stochastic differential 
equation with reflected boundary condition. 
If $\nabla \phi$ is Lipschitz continuous then again Tanaka~\cite[Theorem~4.1]{tanaka} shows that 
this equation admits a unique strong solution. Let us now explain why Tanaka's result remains valid
in the present context, even though $\nabla \phi$ is not assumed to be continuous. 

First of all, notice that since $\nabla \phi$ is the gradient of a convex function, 
it is a monotone map, in the sense that 
\[
\langle x-y,\nabla \phi (x) - \nabla \phi (y)\rangle \geq 0 , \quad \forall x,y\in\R^n.
\]
This property immediately implies pathwise uniqueness for 
the equation. Indeed suppose that $X$ and $\tilde X$ are two solutions
of the equation and that $\Phi$ ant $\tilde\Phi$ are the associated processes. Then 
\[
d \vert X_t - \tilde X_t\vert^2 = - \frac 12 \langle X_t - \tilde X_t , 
 \nabla \phi ( X_t ) - \nabla \phi ( \tilde X_t ) \rangle \, dt 
- \langle X_t - \tilde X_t , d\Phi_t \rangle  
-  \langle \tilde X_t - X_t , d \tilde \Phi_t\rangle . 
\]
The first term of the right-hand side is non positive by 
monotony of $\nabla \phi$. The second term is also non positive. 
Indeed the fact that $\Phi$ is associated to $X$ and $\tilde X$ takes values in $K$ 
imply that $\langle X_t - \tilde X_t , d \Phi_t\rangle \geq 0$ for all $t$. Similarly 
$\langle \tilde X_t - X_t , d \tilde \Phi_t\rangle \geq 0$. Therefore the quantity 
$\vert X_t - \tilde X_t\vert$ is almost surely non increasing, which 
obviously implies that the equation has the pathwise uniqueness property. 
To get existence, one option is to approximate $\phi$ by a smooth 
convex function and pass to the limit, as in~\cite{cepa}. That paper 
is a little involved and is written French so let us give an alternative 
argument for completeness. This argument only works when $\nabla \phi$ 
is bounded, but this is the only case we shall consider here.
When $\nabla \phi$ is bounded, it is well known that an 
application of Girsanov yields the existence of a solution to the 
equation. Indeed, let $X$ be the reflection at the boundary of $K$ of the 
process $x+B$. We have 
\[
d X_t = d B_t - d \Phi_t , 
\]
where $\Phi$ is associated to $X$. Since $\nabla \phi$ is bounded the process 
$(D_t)$ given by 
\[
D_t = \exp \left( - \frac 12 \int_0^t \langle \nabla \phi (X_s) , d B_s \rangle 
- \frac 18 \int_0^t \vert \nabla \phi ( X_s ) \vert^2 \, ds \right) 
\]
is a positive martingale with expectation $1$. If we fix a time horizon $T>0$ and define a 
new probability measure by $d \mathbb Q = D_T \, d\mathbb P$, then by 
Girsanov, the process $(\tilde B_t)_{t\in [0,T]}$ given by 
\[
\tilde B_t = B_t + \frac 12 \int_0^t \nabla \phi ( X_s ) \, ds ,  
\]  
is a standard Brownian motion under the new measure $\mathbb Q$. 
Since 
\[
d X_t = d\tilde B_t - \frac 12 \nabla \phi ( X_t) \, dt + d \Phi_t , 
\]
where $\Phi$ is associated to $X$ this shows that under $\mathbb Q$ the 
process $X$ solves the equation driven by $\tilde B$.  
This proves \emph{weak} existence of a solution, in the sense 
that we had to change the probability space and the Brownian motion. 
However it is well known that weak existence and pathwise uniqueness altogether 
imply strong existence, see for instance~\cite[Chapter IV, Theorem~1.1]{IW}. 
Strictly speaking this only shows strong existence on a finite time interval 
$[0,T]$. But we can eventually let $T$ tend $+\infty$ and use pathwise uniqueness again
to get strong existence of a solution defined for all time. Details are left to the reader.  

The solution $(X_t)$ of the equation is a Markov process, 
whose semigroup is denoted $(P_t)$ in the sequel: 
For every test function $f\colon \R^n \to \R$ and every $x\in K$ 
\[
P_t f(x) = \E_x [f(X_t) ] 
\]
where the subscript $x$ next to the expectation denotes the starting point of $X_t$. 
By It\^o's formula, if $f$ is $\mathcal C^2$-smooth in a neighborhood of $K$ then
\[
d f(X_t) = \langle \nabla f(X_t) , dB_t\rangle
 - \frac 12 \langle \nabla f(X_t) , \nabla \phi ( X_t ) \rangle \, dt 
 + \frac 12 \Delta f(X_t) \, dt - \langle \nabla f(X_t) , d \Phi_t \rangle. 
\]
Here we are using It\^o's formula for a continuous semi-martingale having a 
finite variation part, see for instance~\cite[Chapter IV, Corollary 32.10]{RW}. 
If $f$ satisfies the Neumann boundary condition: 
$\langle \nabla f(x) , \nu \rangle$ for every $x\in\partial K$ 
and every $\nu$ that is normal to the boundary of $K$ at $x$ then the last term 
of the right-hand side vanishes. 
Taking expectation we then see that the generator of the semigroup $(P_t)$ is 
\[
\frac 12 Lf := \frac 12 \left( \Delta f - \langle \nabla f , \nabla \phi \rangle \right) 
\]
with Neumann boundary condition. Also, an integration by part then shows that 
\[
\int_K (L f) g \, d\mu = - \int_K \langle \nabla f , \nabla g\rangle \, d\mu ,  
\]  
for every $f,g$ in the domain of $L$. In particular the operator $L$ is symmetric 
in $L^2 ( \mu)$, which implies that $\mu$ is a reversible measure for the semigroup $(P_t)$. 
\section{Discretization of the Langevin diffusion}
In this section we prove Theorem~\ref{thm:main}. This is the main contribution of the 
article. We begin with a bound on the local time $(\ell_t)$ of the diffusion $(X_t)$ 
at the boundary of $K$. We need to show that $\ell_t = O (t)$. That lemma is essentially taken from
our previous work with Bubeck and Eldan~\cite{BEL}, except that we have simplified the proof 
and improved the result a bit. 
\begin{lem}\label{lem:Lt}
Assume that the Langevin diffusion $(X_t)$ is initiated 
at point $x_0$ in the interior of $K$ and recall the definition~\eqref{eq:sigmar} 
of $r_0$ and $\sigma_0$. Then for every $t>0$ we have
\[
\E [ \ell_t^2 ]^{1/2} \leq \frac {n (1+\sigma_0) t }{r_0} . 
\]
\end{lem}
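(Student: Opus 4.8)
The plan is to control the local time $\ell_t$ by testing the diffusion against a well-chosen function of the distance to the boundary, then using Itô's formula and the semimartingale decomposition of $(X_t)$. Since $\Phi_t = \int_0^t \nu_s\,d\ell_s$ with $\nu_s$ an outer unit normal to $\partial K$ at $X_s$ on the support of $d\ell$, any function $f$ whose gradient satisfies $\langle \nabla f(x), \nu\rangle \geq c > 0$ for all boundary points $x$ and all outer normals $\nu$ will turn the $d\Phi_t$ term in Itô's formula into something bounded below by $c\,d\ell_t$. The natural candidate is essentially a linear functional: if $e$ is a fixed direction such that the set $K$ has its boundary normals making a controlled angle with $e$ — but $K$ is arbitrary, so a global linear function will not work. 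Instead I would use a function built from $x_0$ and the inradius information, for instance $f(x) = |x - y_0|^2$ or better $-\log$ of the distance to a suitable affine half-space; the cleanest route, following the spirit of \cite{BEL}, is to exploit convexity of $K$ via the fact that for any $z$ in the interior of $K$ at distance $\geq r$ from $\partial K$, and any boundary point $x$ with outer normal $\nu$, one has $\langle x - z, \nu\rangle \geq r$.

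Concretely, I would apply Itô's formula to $t \mapsto \langle X_t - x_0, u\rangle$ for a suitable deterministic or adapted unit vector, or — to avoid choosing a direction — to the convex function $g(x) = \tfrac12|x|^2$ composed appropriately, and track the finite-variation contribution. The key inequality is that along the local-time measure, $\langle X_s - x_0, \nu_s\rangle \geq r_0$, so that $\int_0^t \langle X_s - x_0, d\Phi_s\rangle \geq r_0\,\ell_t$. Meanwhile, from $dX_t = dB_t - \tfrac12\nabla\phi(X_t)\,dt - d\Phi_t$, expanding $d|X_t - x_0|^2$ gives
\[
|X_t - x_0|^2 = 2\!\int_0^t\!\langle X_s - x_0, dB_s\rangle + nt - \!\int_0^t\!\langle X_s - x_0, \nabla\phi(X_s)\rangle\,ds - 2\!\int_0^t\!\langle X_s - x_0, d\Phi_s\rangle.
\]
Rearranging, $2r_0\,\ell_t \leq nt - |X_t - x_0|^2 + 2\int_0^t\langle X_s-x_0,dB_s\rangle - \int_0^t\langle X_s - x_0,\nabla\phi(X_s)\rangle\,ds$. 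The drift term involving $\nabla\phi$ must be handled using that $x_0$ can be taken near the minimizer — this is where $\sigma_0$ enters: convexity gives $\langle X_s - x_0, \nabla\phi(X_s)\rangle \geq \phi(X_s) - \phi(x_0) \geq \inf_K\phi - \phi(x_0) = -n\sigma_0$, so $-\int_0^t\langle X_s-x_0,\nabla\phi(X_s)\rangle\,ds \leq n\sigma_0 t$. Dropping $-|X_t-x_0|^2 \leq 0$ yields $2r_0\,\ell_t \leq n(1+\sigma_0)t + 2\int_0^t\langle X_s - x_0,dB_s\rangle$.

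It remains to pass to the $L^2$ norm. Taking expectations kills the martingale term and gives the bound on $\E[\ell_t]$, but the statement asks for $\E[\ell_t^2]^{1/2}$, so I would instead square the inequality $r_0\ell_t \leq \tfrac{n(1+\sigma_0)t}{2} + M_t$ where $M_t = \int_0^t\langle X_s-x_0,dB_s\rangle$. The main obstacle is controlling $\E[M_t^2] = \E\int_0^t|X_s-x_0|^2\,ds$, which requires an a priori second-moment bound on $X_s$ and would only give a bound weaker than claimed. The resolution — and the genuinely clever point of the lemma — is presumably to use a smarter test function, or a self-improving argument: apply the same identity with $x_0$ replaced by $X_t$ run backward, or use that $\ell$ is monotone to write $r_0\ell_t \leq \tfrac12 n(1+\sigma_0)t + \max_{s\le t} M_s$ and then apply the Burkholder–Davis–Gundy / Doob $L^2$ inequality. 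Most likely, though, the cleanest path drops the martingale entirely by choosing $f$ so that $\langle \nabla f(X_s), dB_s\rangle$ integrates against $X$ in a way that the stochastic integral has mean zero \emph{and} the remaining deterministic terms already dominate $\ell_t$ pathwise up to a martingale with zero quadratic variation contribution; I expect the actual proof instead bounds $\ell_t^2$ by bounding $\ell_t$ pathwise by a supermartingale. So the step I would flag as the crux is converting the easy $L^1$ bound into the stated $L^2$ bound without picking up extra factors — this is where I would expect to spend the real effort, likely via a stopping-time argument on $\{\ell_t > \lambda\}$ combined with the fact that $\langle X_s - x_0, dB_s\rangle$ has bounded-in-expectation quadratic variation once one knows $\ell_t$ is finite.
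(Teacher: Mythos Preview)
Your setup is exactly right and matches the paper: apply It\^o to $|X_t-x_0|^2$, use $\langle X_s-x_0,\nu_s\rangle\ge r_0$ on the support of $d\ell$, and use convexity of $\phi$ to get $-\langle X_s-x_0,\nabla\phi(X_s)\rangle\le n\sigma_0$. This yields the pathwise inequality
\[
|X_t-x_0|^2 + 2r_0\,\ell_t \;\le\; n(1+\sigma_0)\,t + 2M_t,\qquad M_t=\int_0^t\langle X_s-x_0,dB_s\rangle.
\]

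The gap is in the last paragraph. You correctly identify that the $L^2$ bound requires controlling $\E[M_t^2]=\E\int_0^t|X_s-x_0|^2\,ds$, but you then go looking for a stopping-time argument, BDG, or a smarter test function. None of that is needed: the required a priori bound on $\E[|X_s-x_0|^2]$ is already contained in the very inequality you just wrote down. Take expectation in it (the martingale term vanishes) and drop the nonnegative term $2r_0\,\E[\ell_s]$ to get
\[
\E\bigl[|X_s-x_0|^2\bigr]\;\le\; n(1+\sigma_0)\,s,
\]
hence $\E[M_t^2]\le \tfrac12 n(1+\sigma_0)\,t^2$. This is the ``self-improving'' step you sensed was there but did not find.

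The second point you overcomplicate is the squaring. With $c=n(1+\sigma_0)t$ a constant and $\E[M_t]=0$, one has $\E[(c+2M_t)^2]=c^2+4\E[M_t^2]$: the cross term disappears, so no factor is lost. Combining,
\[
4r_0^2\,\E[\ell_t^2]\;\le\; n^2(1+\sigma_0)^2 t^2 + 2n(1+\sigma_0)\,t^2,
\]
which is comfortably below $4n^2(1+\sigma_0)^2 t^2$ and gives the stated bound. So the ``crux'' you flagged is not a crux at all; the $L^1$ argument bootstraps into the $L^2$ argument with one extra line.
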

\begin{proof}
By It\^o's formula we have
\[
d \vert X_t - x_0 \vert^2 = 2 \langle X_t -x_0, d B_t \rangle 
- \langle X_t -x_0, \nabla \phi (X_t) \rangle d t 
- 2 \langle X_t - x_0 , d \Phi_t \rangle + n \, dt  .
\]
Recall that $d \Phi_t = \nu_t \, d \ell_t$ where $\nu_t$ is an outer unit normal at $X_t$. 
By definition of $(\Phi_t)$, $r_0$ and $\ell_t$ we have 
\[
\langle X_t - x_0 , d\Phi_t \rangle 
\geq \sup_{x \in K } \langle x -x_0 , d\Phi_t \rangle 
\geq r_0 d \ell_t .   
\]
Also by convexity of $\phi$
\[ 
- \langle X_t - x_0 , \nabla \phi ( X_t ) \rangle \leq \phi (x_0) - \phi ( X_t ) \leq n \sigma_0. 
\]
We thus obtain
\begin{equation}\label{eq:stepell}
\vert X_t -x_0 \vert^2 
+ 2 r_0 \ell_t \leq n(1+\sigma_0)t + \int_0^t \langle X_s -x_0 , dB_s \rangle . 
\end{equation}
Taking expectation already gives a bound on the first moment
of $\ell_t$. To get a bound on the second moment observe 
that~\eqref{eq:stepell} implies that 
\[
4 r_0^2 \E [  \ell_t^2 ]  \leq  n^2 (1 + \sigma_0)^2 t^2 
+ \E \left[ \left( \int_0^t \langle X_s-x_0,dB_s\rangle \right)^2 \right] . 
\]
By It\^o's isometry and using~\eqref{eq:stepell} again, 
this time to bound $\E [ \vert X_t-x_0 \vert^2 ]$, 
we get
\[
\E \left[ \left( \int_0^t \langle X_s-x_0,dB_s\rangle \right)^2 \right]
= \E \left[ \int_0^t \vert X_s -x_0 \vert^2  \, ds \right] \leq 
 n (1+\sigma_0) \frac {t^2} 2 .  
\]
Plugging this into the previous display yields the 
desired inequality.  
\end{proof}
We also need the following elementary bound on the maximum 
of Gaussian vectors. We provide a proof for completeness.
\begin{lem}\label{lem:max}
Let $G_1,\dotsc,G_k$ be standard Gaussian vectors on $\R^n$. Then 
\[
\E \left[ \max_{i\leq k} \left\{ \vert G_i \vert^2 \right\} \right]
\leq \e ( n + 2 \log k ).   
\]
\end{lem}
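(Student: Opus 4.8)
The plan is to use the standard exponential-moment (Chernoff-type) bound for the maximum of random variables together with the explicit Laplace transform of the chi-square distribution. Recall that if $G$ is a standard Gaussian vector on $\R^n$ then $\vert G\vert^2$ has a chi-square law with $n$ degrees of freedom, whose moment generating function is $\E[\e^{\lambda \vert G\vert^2}] = (1-2\lambda)^{-n/2}$ for $\lambda < 1/2$. The key inequality is the elementary fact that for any nonnegative random variables $Y_1,\dots,Y_k$ and any $\lambda>0$,
\[
\E\left[\max_{i\leq k} Y_i\right] \leq \frac 1\lambda \log \E\left[\e^{\lambda \max_{i\leq k} Y_i}\right]
= \frac 1\lambda \log \E\left[\max_{i\leq k}\e^{\lambda Y_i}\right]
\leq \frac 1\lambda \log \left(\sum_{i=1}^k \E\left[\e^{\lambda Y_i}\right]\right),
\]
where the first step is Jensen's inequality applied to the concave function $\log$ (equivalently, $x\leq \frac1\lambda\log \e^{\lambda x}$ combined with monotonicity of $\log$ once we pass to the expectation via Jensen), and the last step bounds the max of the exponentials by their sum.

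Applying this with $Y_i = \vert G_i\vert^2$ gives $\E[\max_i \vert G_i\vert^2] \leq \frac1\lambda \log\left(k(1-2\lambda)^{-n/2}\right) = \frac{\log k}{\lambda} - \frac{n}{2\lambda}\log(1-2\lambda)$ for any $0<\lambda<1/2$. The remaining task is to choose $\lambda$ well. The natural choice that makes the $\log(1-2\lambda)$ term clean is to take $1-2\lambda = \e^{-1}$, i.e. $\lambda = \frac12(1-\e^{-1})$; but then $\frac1\lambda = \frac{2}{1-\e^{-1}}$ is a constant slightly larger than $2$, which is a touch too big to land exactly on the claimed constant $\e$. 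A cleaner route is to take $\lambda$ depending on nothing, namely $\lambda = \frac12 - \frac{1}{2\e}$ so that $-\log(1-2\lambda) = 1$, giving the bound $\frac{2\e}{\e-1}(\log k + n/2) = \frac{\e}{\e-1}(n+2\log k)$; since $\frac{\e}{\e-1} \approx 1.58 < \e$, this already suffices. Alternatively one can simply keep $\lambda$ as a free parameter and optimize, but the constant $\e$ in the statement is not tight and any admissible $\lambda$ bounded away from $0$ and $1/2$ works; I would just pick $\lambda$ so that the coefficient is at most $\e$ and state it.

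There is essentially no hard part here — the only thing to be careful about is the bookkeeping of constants to make sure the final coefficient is genuinely $\leq \e$ as claimed, and to justify the interchange of max and expectation via the union-type bound $\E[\max_i \e^{\lambda Y_i}] \leq \sum_i \E[\e^{\lambda Y_i}]$ rather than trying to use independence (which is not assumed — the $G_i$ need not be independent, so one really does need the crude sum bound). I would write the proof in three lines: state the exponential-moment inequality, plug in the chi-square MGF, and specialize $\lambda$.
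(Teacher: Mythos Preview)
Your argument is correct, though the middle paragraph is a bit muddled: the two choices of $\lambda$ you describe, $1-2\lambda=\e^{-1}$ and $\lambda=\tfrac12-\tfrac1{2\e}$, are the \emph{same} number, and $\tfrac{2}{1-\e^{-1}}=\tfrac{2\e}{\e-1}\approx 3.16$ is not ``slightly larger than $2$''. But none of this matters: with that choice one gets
\[
\E\left[\max_{i\leq k}\vert G_i\vert^2\right]\leq \frac{1}{\lambda}\left(\log k+\frac n2\right)=\frac{\e}{\e-1}\,(n+2\log k)<\e\,(n+2\log k),
\]
which is exactly the claim (with a better constant). Your observation that independence is not assumed, so one must use the crude bound $\max_i\e^{\lambda Y_i}\leq\sum_i\e^{\lambda Y_i}$, is right and applies equally to the paper's proof.

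The paper takes a genuinely different route: instead of the MGF/Chernoff bound it uses $L^p$ moments. Writing $\chi_i=\vert G_i\vert^2$, the paper bounds $\E[\chi_i^p]\leq (n+2(p-1))^p$ for integer $p$ via the explicit gamma-function formula, then applies
\[
\E\left[\max_i\chi_i\right]\leq\left(\sum_i\E[\chi_i^p]\right)^{1/p}\leq k^{1/p}\,(n+2(p-1)),
\]
and takes $p=\lceil\log k\rceil$ so that $k^{1/p}\leq\e$ and $p-1\leq\log k$. Both methods are classical and neither requires independence; your MGF approach is perhaps more familiar and happens to give the sharper constant $\tfrac{\e}{\e-1}$, while the moment approach avoids the MGF entirely and works directly with polynomial moments. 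Either would be acceptable here.
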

\begin{proof}
Set $\chi_i = \vert G_i \vert^2$ for every $i$. The 
$p$-th moment of $\chi_i$ satisfies 
\[
\E [\chi_i^p ] 
= \frac{ 2^p \Gamma \left( \frac n2 + p \right) }{ \Gamma \left( \frac n2 \right) } 
\leq (n + 2 (p-1) )^p  , 
\] 
at least when $p$ is an integer. Therefore
\[
\E \left[\max_{i\leq k} \{ \chi_i \} \right]
\leq \left[\sum_{i \leq k} \E [\chi_i^p ] \right]^{1/p} 
\leq k^{1/p} (n + 2 (p-1) ) 
\]
Choosing $p$ to be the smallest integer larger than $\log k$ 
yields the result. 
\end{proof}
We are now in a position to prove 
the main result. 
\begin{proof}[Proof of Theorem~\ref{thm:main}]
We first couple the diffusion $(X_t)$ and its discretization $(x_k)$ 
in the most natural way one could think of, by choosing 
the sequence $(\xi_k)$ as follows: 
\begin{equation}\label{eq:coupling}
\xi_{k} = B_{k \eta } - B_{(k-1)\eta }  , \quad k \geq 1. 
\end{equation}
Observe that for any $x\in K$ and $y\in\R^n$ 
we have $\vert x - \mathcal P_K (y) \vert \leq \vert x- y\vert$. 
Therefore
\[
\begin{split}
\vert X_{(i+1)\eta} -  x_{i+1} \vert^2
& = \left\vert  X_{(i+1)\eta} - \mathcal P \left( x_i + \xi_{i+1} 
- \frac \eta 2 \nabla \phi (x_i) \right) \right\vert^2  \\
& \leq \left\vert  X_{(i+1)\eta} - x_i - \xi_{i+1} 
+ \frac \eta 2 \nabla \phi (x_i) \right\vert^2 . 
\end{split}
\]
Let $(\widetilde X_t)$ be the process defined by
\[
\widetilde X_t = x_{i} + B_t - B_{i\eta} -\frac{t-i\eta}2 \nabla \phi (x_i )   
\]
for all $t$ between $i\eta$ and $(i+1)\eta$. Then $\widetilde X_{i\eta} = x_i$ and 
the previous display can be rewritten as 
\[
\vert X_{(i+1)\eta} -  x_{i+1} \vert^2 \leq 
\vert X_{(i+1)\eta} - \widetilde X_{(i+1)\eta} \vert^2 . 
\]
The process $(X_t-\widetilde X_t)$ 
is continuous with finite variation on $[i\eta , (i+1)\eta]$ 
(the Brownian part cancels out). Therefore, on that interval 
we have
\[
\begin{split}
d \vert X_t - \widetilde X_t \vert^2 
& = 2 \langle X_t - \widetilde X_t , d X_t - d \widetilde X_t \rangle \\
& = - \langle X_t - \widetilde X_t , \nabla \phi (X_t) 
- \nabla \phi ( x_i ) \rangle \, dt - 2 \langle X_t - \widetilde X_t ,  d \Phi_t \rangle . 
\end{split}
\]
Again by monotony of $\nabla \phi$ we have 
$\langle X_t - x_i , \nabla \phi (X_t) - \nabla \phi ( x_i ) \rangle \geq 0$.
Also since $x_i \in K$ and $(\Phi_t)$ is associated to $(X_t)$ we have 
$\langle X_t - x_i , d \Phi_t \rangle \geq 0$. Plugging this back in the previous 
display yields
\[
d \vert X_t - \widetilde X_t \vert^2 
\leq \langle \widetilde X_t -x_i , (\nabla \phi ( X_t ) -\nabla \phi ( x_i ) ) \, dt 
+ 2 d\Phi_t \rangle . 
\]
Now we replace $\widetilde X_t$ by its value, and we integrate 
between $i\eta$ and $(i+1)\eta$. We get 
\[
\begin{split}
\vert X_{(i+1)\eta} - x_{i+1} \vert^2
& \leq \vert X_{i\eta} - x_i \vert^2 \\
& + \int_{i\eta}^{(i+1)\eta} 
\langle B_t - B_{i\eta} - \frac{t-i\eta}2 \nabla \phi (x_i ) , 
( \nabla \phi ( X_t ) - \nabla \phi ( x_i ) ) \, dt 
+ 2 d\Phi_t \rangle .
\end{split} 
\]
We now take expectation. Note that the martingale property 
of the Brownian motion implies that 
$\E [ \langle B_t - B_{i\eta} , \nabla \phi ( x_i ) \rangle ] = 0$ 
and that 
\[
\E [ \langle B_t - B_{i\eta} , d \Phi_t \rangle ] = 
 \E [\langle B_{(i+1)\eta} - B_{i\eta} , d \Phi_t \rangle ] . 
\]
We also use the hypothesis $\vert\nabla \phi \vert \leq L$ and 
the inequality $\E [ \vert B_t - B_{i\eta} \vert ]\leq n^{1/2} (t-i\eta)^{1/2}$. 
We obtain 
\begin{equation}\label{eq:something}
\begin{split}
\E \left[\vert X_{(i+1)\eta} - x_{i+1} \vert^2 \right]
& \leq \E \left[\vert X_{i\eta} - x_i \vert^2 \right]
+ \frac23 L \eta^{3/2} n^{1/2} \\ 
& + 2 \,\E \left[\langle \xi_{i+1} , \Phi_{(i+1)\eta} - \Phi_{i\eta} \rangle \right]
+ \frac 12 L^2 \eta^2 
+ L\eta\, \E [\ell_{(i+1)\eta} - \ell_{i\eta} ] ,
\end{split}
\end{equation}
where $\xi_{i+1} = B_{(i+1)\eta} - B_{i\eta}$. 
By Cauchy-Schwarz and Lemma~\ref{lem:max} 
\[
\begin{split} 
\E \left[ \sum_{i=0}^{k-1} \langle \xi_{i+1} ,  \Phi_{(i+1)\eta} - \Phi_{i\eta} \rangle  \right]
& \leq \E \left[ \max_{i\leq k} \left\{ \vert \xi_{i} \vert \right\} 
\, \ell_{k\eta} \right] \\
& \leq \e^{1/2} (n+2 \log k)^{1/2} \eta^{1/2} \E [ \ell_{k\eta}^2 ]^{1/2} . 
\end{split}
\]
Summing~\eqref{eq:something} over $i$ thus yields 
\[
\begin{split}
\E \left[ \vert X_{k\eta} - x_k \vert^2 \right]
& \leq  2 \e^{1/2} (n+2 \log k)^{1/2} \eta^{1/2} \E [ \ell_{k\eta}^2 ]^{1/2}  \\
& + L \eta \, \E [ \ell_{k\eta} ]  +  \frac 23 L n^{1/2} k \eta^{3/2} + \frac 12 L^2 k \eta^2 . 
\end{split}
\]
Now recall from Lemma~\ref{lem:Lt} that 
\[
\E [ \ell_{k\eta}^2 ]^{1/2} \leq \frac{ n (1+\sigma_0) k\eta } {r_0}.   
\]
Lastly, we use the assumption $\eta < n / L^2$ to simplify the inequality a 
bit. We finally obtain 
\[
\E \left[ \vert X_{k\eta} - x_k \vert^2 \right]
\leq (2\e^{1/2} +1)(n+2\log k)^{1/2} \frac{n(1+\sigma_0)}{r_0} k \eta^{3/2} 
+ \frac 76 L n^{1/2} k \eta^{3/2} , 
\]
which is the result. 
\end{proof}
\section{Convergence of the algorithm under log-Sobolev} 
In this section we prove Theorem~\ref{thm:logsob}.
Observe first that the Wasserstein distance is indeed a distance, 
so it satisfies the triangle inequality and we have
\[
\frac 1n W_2^2 ( x_k , \mu ) 
 \leq \frac 2n W_2^2 ( x_k , X_{k\eta} ) + \frac 2n W_2^2 ( X_{k\eta} , \mu ) . 
\]
The first term of the right-side is handled by Theorem~\ref{thm:main}, 
so we only need to bound the second term. This is the purpose 
of the following lemma. In this lemma $(P_t)$ stands for the semi-group
of the Langevin diffusion~\eqref{eq:tanaka-diff}. In other 
words $\nu P_t$ denotes the law of $X_t$ when $X_0$ has law $\nu$. 
\begin{lem}
Assume that $\mu$ is log-concave with globally Lipschitz 
potential on its support, with Lipschitz constant $L$. Assume
also that $\mu$ satisfies log-Sobolev with constant $C_{LS}$. 
Then for every $x_0$ in the interior of the support of $\mu$
and every $t>0$ we have 
\[
\frac 1n W_2^2 ( \delta_{x_0} P_t , \mu ) 
\leq 4 C_{LS} \left( 1 + \log \left( \frac{ \max ( C_{LS} ,1 )\, n }{ \min ( r_0 , 1 ) } \right)   
+ \sigma_0 + \frac L n \right) \e^{- t / 2 C_{LS}} ,  
\]
where again the parameters $\sigma_0$ and $r_0$ are defined by~\eqref{eq:sigmar}. 
\end{lem}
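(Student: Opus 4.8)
The plan is to use the standard entropy-decay estimate along the Langevin semigroup, combined with the transport–entropy inequality coming from log-Sobolev. Since $\mu$ satisfies log-Sobolev with constant $C_{LS}$, the semigroup $(P_t)$ contracts relative entropy exponentially: for any initial law $\nu$,
\[
D(\nu P_t \mid \mu) \leq \e^{-t/C_{LS}} D(\nu \mid \mu).
\]
This is the classical de Bruijn / Bakry–Émery computation: $\frac{d}{dt} D(\nu P_t \mid \mu) = -\frac12 I(\nu P_t \mid \mu) \leq -\frac{1}{C_{LS}} D(\nu P_t \mid \mu)$, where the factor $\frac12$ matches the normalization of the generator $\frac12 L$ used in Section 2, and one should note that the Neumann boundary condition makes the boundary terms in the integration by parts vanish, exactly as recorded at the end of Section 2. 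Then log-Sobolev also gives the Otto–Villani / Talagrand transport inequality $W_2^2(\rho,\mu) \leq 2 C_{LS}\, D(\rho \mid \mu)$, so
\[
\frac1n W_2^2(\delta_{x_0} P_t, \mu) \leq \frac{2 C_{LS}}{n}\, \e^{-t/C_{LS}}\, D(\delta_{x_0} \mid \mu).
\]
The apparent difficulty is that $D(\delta_{x_0}\mid\mu) = +\infty$, so one cannot start from a Dirac mass directly.

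**To get around this**, the trick is to run the diffusion for a short warm-up time and bound the entropy of $\delta_{x_0} P_s$ for some well-chosen $s>0$, then apply the contraction from time $s$ onward. A clean way: since the generator is $\frac12(\Delta - \langle\nabla\cdot,\nabla\phi\rangle)$ with $|\nabla\phi|\leq L$ and with reflection keeping the process in $K$, the law $\delta_{x_0}P_s$ has a density comparable to the heat kernel at small times; alternatively, bound $D(\delta_{x_0}P_s\mid\mu)$ by comparing to the Gaussian $\gamma_s = \mathcal N(x_0, s\,\mathrm{Id})$ reflected in $K$. The Lipschitz bound on $\phi$ controls $\int \phi\, d(\delta_{x_0}P_s)$ in terms of $\phi(x_0)$ plus $L\sqrt{ns}$, i.e. in terms of $n\sigma_0$ plus lower-order terms, while the entropy of the Gaussian part contributes roughly $\frac n2\log(1/s)$ plus the normalization constant $\log\int_K \e^{-\phi}$, which is itself controlled by $r_0$ (the measure puts mass at least of a computable size on the ball $B(x_0,r_0)$, using $|\phi|$-Lipschitz to bound $\phi$ on that ball). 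Putting $s = \Theta(\min(C_{LS},1))$ or simply $s=1$ should make $D(\delta_{x_0}P_s\mid\mu) = O\big(n(1 + \log(\max(C_{LS},1)n/\min(r_0,1)) + \sigma_0 + L/n)\big)$, matching the bracket in the statement.

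**Then** one concludes by chaining: for $t\geq s$,
\[
\frac1n W_2^2(\delta_{x_0}P_t,\mu) \leq \frac{2C_{LS}}{n}\, D(\delta_{x_0}P_s\mid\mu)\, \e^{-(t-s)/C_{LS}},
\]
and absorbing $\e^{s/C_{LS}}$ into constants (with $s$ of order $\min(C_{LS},1)$ this factor is $O(1)$), together with choosing the halved exponent $\e^{-t/2C_{LS}}$ to leave slack for the final rounding of constants, yields exactly the asserted bound. For $t < s$ the bound is trivial since its right-hand side is at least $8C_{LS}\cdot\frac12\geq 2C_{LS}$, which dominates $\frac1n W_2^2(\delta_{x_0}P_t,\mu)$ for small $t$ by a direct second-moment estimate of the diffusion (as in the proof of Lemma~\ref{lem:Lt}). **The main obstacle** is the warm-up entropy estimate: getting the logarithmic dependence on $r_0$ and $C_{LS}$ exactly right requires carefully lower-bounding the partition function $\int_K \e^{-\phi}$ using only $r_0$ and the Lipschitz constant, and carefully choosing the warm-up time $s$ so that the Gaussian entropy term, the drift-displacement term, and the $\e^{s/C_{LS}}$ prefactor all balance; everything else is a routine application of the Bakry–Émery semigroup argument and the Talagrand inequality.
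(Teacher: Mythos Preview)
Your overall plan (entropy decay plus Talagrand) matches the paper, but you diverge at the key step of handling $D(\delta_{x_0}\mid\mu)=+\infty$. You propose to run the diffusion for a warm-up time $s$ and bound $D(\delta_{x_0}P_s\mid\mu)$ via a short-time heat-kernel/Gaussian comparison. The paper never estimates the entropy of $\delta_{x_0}P_s$. Instead it uses the $W_2$-contraction of the semigroup (an immediate consequence of log-concavity, via parallel coupling) together with the triangle inequality to get, for \emph{any} probability measure $\nu$,
\[
W_2^2(\delta_{x_0}P_t,\mu)\;\leq\;2\,W_2^2(\delta_{x_0},\nu)\;+\;4C_{LS}\,\e^{-t/C_{LS}}\,D(\nu\mid\mu),
\]
and then takes $\nu=\mu|_{B(x_0,\delta)}$, for which $W_2^2(\delta_{x_0},\nu)\leq\delta^2$ and $D(\nu\mid\mu)=-\log\mu(B(x_0,\delta))$ is elementary to bound: this is where the dependence on $r_0$, $\sigma_0$, $L$ and $C_{LS}$ enters (the latter via $\min_K\phi\leq S(\mu)\leq\frac n2\log(2\pi\e\,C_{LS})$, using that Gaussians maximize Shannon entropy and that log-Sobolev controls the covariance). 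Optimizing over $\delta$ produces the stated inequality, with the halved exponent arising from the choice $\delta\sim\e^{-t/2C_{LS}}$.

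The step you flag as the ``main obstacle'' is in fact a genuine gap in your route, not merely a matter of constants. Bounding $D(\delta_{x_0}P_s\mid\mu)$ requires a lower bound on the Shannon entropy of $\delta_{x_0}P_s$, i.e.\ a pointwise upper bound on the transition density of the \emph{reflected} diffusion with a discontinuous (merely bounded) drift on a general convex domain $K$. Neither the Girsanov comparison with reflected Brownian motion nor a direct heat-kernel bound is standard in this generality, and you have not indicated how to carry it out; the boundary reflection in particular spoils the naive comparison with $\mathcal N(x_0,s\,\mathrm{Id})$. The paper's trick sidesteps any heat-kernel analysis entirely, trading it for the $W_2$-contraction property, which is cheap here precisely because of log-concavity.
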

\begin{proof}
If $\mu$ satisfies the logarithmic Sobolev inequality then 
it satisfies the transport inequality:
\[
W_2^2 ( \nu ; \mu ) \leq 2 C_{CLS} D( \nu \mid \mu)  
\]
for every probability measure $\nu$. This is due to Otto and 
Villani~\cite{OV}, see also~\cite{BoGL}. The log-Sobolev
inequality also implies that the relative entropy
decays exponentially fast along the semigroup $(P_t)$: 
\[
D (\nu P_t \mid \mu ) \leq \e^{ - t / C_{LS} }  D ( \nu \mid \mu ). 
\]
This is really folklore, one just need to observe that the derivative 
of the entropy is the Fisher information, and combine log-Sobolev with a
Gronwall type argument. Combining the two inequalities yields
\[
W_2^2 ( \nu P_t , \mu ) \leq 2 C_{LS}  \e^{ - t / C_{LS} } D ( \nu \mid \mu ) . 
\]
This cannot be applied directly to a Dirac point mass. However, 
observe that the convexity of $\phi$ implies that the Wasserstein 
distance is non increasing along the semigroup: For any two probability
measures $\nu_0,\nu_1$ and every time $t$ we have 
\[
W_2^2 ( \nu_0 P_t , \nu_1 P_t ) \leq  W_2^2 ( \nu_0 , \nu_1 ).  
\]
This is a well-known fact, which is easily seen using parallel coupling.
See the proof of the pathwise uniqueness property in section 2. 
Combining this with the triangle inequality for $W_2$ we thus get 
\begin{equation}\label{eq:stepLS}
\begin{split}
W_2^2 ( \delta_{x_0} P_t , \mu ) 
& \leq 2 W_2^2 ( \delta_{x_0} P_t , \nu P_t ) + 2 W_2^2 ( \nu P_t , \mu ) \\
& \leq 2 W_2^2 ( \delta_{x_0},\nu ) + 4 C_{LS} \e^{ - t / C_{LS} } D ( \nu \mid \mu ). 
\end{split}
\end{equation}
This is valid for every $\nu$ and it is natural to take
$\nu$ to be the measure $\mu$ conditioned to the ball $B(x_0,\delta)$ 
for some small $\delta >0$. Then $W^2_2 ( \delta_{x_0} , \nu ) \leq \delta^2$ 
and 
\[
D( \nu  \mid \mu ) 
= \log \left( \frac 1 { \mu ( B(x_0, \delta) ) } \right) . 
\]
If $\delta \leq r_0$ then $B(x_0,\delta)$ is included in the support of $\mu$ and 
we have  
\[
\log \left( \frac 1 { \mu ( B(x_0 , \delta) ) } \right) \leq 
\max_{B(x_0,\delta)} \{ \phi \} + n \log \left( \frac 1 \delta \right) 
+ \log \left( \frac 1 {v_n} \right) , 
\]
where $v_n$ is the Lebesgue measure of the unit ball in dimension $n$. 
Recall that $\log \left( \frac 1 {v_n} \right) \leq n \log n $. 
Also
\[
\max_{B(x_0,\delta)} \{ \phi \} \leq \phi(x_0) + L\delta 
\leq \min_{K} \{ \phi \} + n \sigma_0 + L \delta. 
\]
Moreover
\[
\min_K \{ \phi \} 
\leq \int_{\R^n} \phi \, d\mu = S(\mu) , 
\] 
where $S$ denotes the Shannon entropy. It is well-known 
that among measures of fixed covariance the Gaussian measure 
maximizes the Shannon entropy (this is just Jensen actually). 
Therefore 
\[
\begin{split}
S ( \mu ) 
& = \frac n2 \log (2\pi e) + \frac 12 \log \mathrm{det} ( \mathrm{cov} ( \mu ) )  \\
& \leq \frac n 2  \log (2\pi e C_{LS})  . 
\end{split}
\]
The last inequality is just a consequence 
of the fact that the log-Sobolev inequality implies Poincar\'e, 
which in turn implies a bound on the covariance matrix. 
Plugging everything back in~\eqref{eq:stepLS} we get 
\[
\frac 1n W_2^2 ( \delta_{x_0} P_t , \mu )
\leq \frac {2 \delta^2}n 
+  4 C_{LS} \left( \frac 32 + \log \left( \frac n \delta \right) 
+  \sigma_0 + \frac Ln \right) \e^{- t / C_{LS}}
\]
for every $\delta \leq \min (r_0,1)$. 
Choosing $\delta = \min \left( (2 n C_{LS})^{1/2} \e^{-t/2C_{LS}} , r_0 , 1 \right)$ 
and using the inequality $x\e^{-x} \leq \e^{-x/2}$ yields the result. 
\end{proof}
\section{A convergence result using Poincar\'e only}\label{sec:poincare}
In this section we prove Theorem~\ref{thm:poincare}. 
Again the idea is to write 
\[
\frac 1n W_2^2 ( x_k , \mu ) 
\leq \frac 2n W_2^2 (x_k , X_{k\eta} ) + \frac 2n W_2^2 ( X_{k\eta} , \mu ) , 
\]
and to bound the first term using Theorem~\ref{thm:main}. 
Actually, note that here we allow $x_0=X_0$ to be random, so we rather condition 
on $x_0$, apply Theorem~\ref{thm:main} and then take expectation again. 

Therefore it is enough to bound the second term. This is 
where the Poincar\'e inequality enters the picture. Note 
that this part of the argument does not rely on the log-concavity 
of $\mu$. 
We shall use the following transport/chi-square divergence inequality: 
If $\mu$ satisfies Poincar\'e with constant $C_P$ then for every 
probability measure $\nu$ on $\R^n$ we have 
\[
W_2^2 ( \nu , \mu ) \leq 2 C_P \chi^2 ( \nu \mid \mu ) . 
\]
It seems that this was first proved by Ding~\cite{ding}, with 
a worst constant. The result with constant $2$ is due to 
Liu~\cite{liu}. His argument is combines together the Langevin diffusion and 
the Hamilton-Jacobi semigroup. 

On the other hand it is well-known that 
under Poincar\'e the chi-square divergence decays 
exponentially fast along the Langevin diffusion. 
Letting $(P_t)$ be the semigroup of the Langevin 
diffusion associated to $\mu$ we have 
\[
\chi^2 ( \nu P_t \mid \mu ) \leq \e^{-t / C_P} \chi^2 ( \nu \mid \mu ) . 
\] 
We thus get the following: 
\begin{lem}
Suppose that $\mu$ satisfies Poincar\'e 
with constant $C_P$. Then for every probability 
measure $\nu$ on $\R^n$ and every $t>0$ we have 
\[
W_2^2 ( \nu P_t  , \mu ) \leq 2 C_P \, \chi^2 ( \nu \mid \mu ) \, \e^{-t / C_P} . 
\]
\end{lem}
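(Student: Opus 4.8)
The plan is to simply chain together the two facts quoted immediately before the statement of the lemma. The first is the transport/chi-square inequality $W_2^2(\rho,\mu)\le 2C_P\,\chi^2(\rho\mid\mu)$, valid for any probability measure $\rho$ whenever $\mu$ satisfies Poincar\'e with constant $C_P$ (Liu's improvement of Ding). The second is the exponential decay of the chi-square divergence along the Langevin semigroup, $\chi^2(\nu P_t\mid\mu)\le \e^{-t/C_P}\chi^2(\nu\mid\mu)$. Neither is proved in the excerpt, but both are announced as known, so we are entitled to use them.

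First I would apply the transport inequality to the probability measure $\rho=\nu P_t$, which gives
\[
W_2^2(\nu P_t,\mu)\le 2C_P\,\chi^2(\nu P_t\mid\mu).
\]
Then I would insert the decay estimate on the right-hand side to obtain
\[
W_2^2(\nu P_t,\mu)\le 2C_P\,\e^{-t/C_P}\chi^2(\nu\mid\mu),
\]
which is exactly the asserted inequality. That is the whole argument.

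There is essentially no obstacle here: the lemma is a one-line combination of two cited external results, and the only thing to be careful about is that both results apply under the single hypothesis stated, namely that $\mu$ satisfies Poincar\'e with constant $C_P$ — in particular, no log-concavity and no Lipschitz condition on the potential is needed for this step, which is consistent with the remark in the text that this part of the argument does not rely on log-concavity. If anything merited a word of justification, it would be verifying that $\nu P_t$ is absolutely continuous with respect to $\mu$ so that $\chi^2(\nu P_t\mid\mu)$ is well-defined (and, if it is infinite, the inequality holds trivially), but this is routine and the bound is vacuous otherwise.
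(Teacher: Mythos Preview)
Your proposal is correct and matches the paper's approach exactly: the paper simply states the two ingredients (Liu's $W_2$/chi-square inequality and the exponential decay of $\chi^2$ along $(P_t)$ under Poincar\'e) and then writes ``We thus get the following,'' presenting the lemma as their immediate combination without a separate proof environment.
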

This finishes the proof of Theorem~\ref{thm:poincare}. 

We end this section with a simple 
estimate of the chi-square divergence 
of an appropriate Gaussian measure 
to $\mu$ in the unconstrained case. 
\begin{proof}[Proof of Lemma~\ref{lem:chi2}]
Recall that $\mu$ is assumed to be supported on the 
whole $\R^n$ with convex and globally Lipschitz potential $\phi$. 
Let $\gamma$ be the Gaussian measure centered at a point $x_0$ and 
with covariance $\alpha Id$ for some $\alpha >0$. Then 
\[
\begin{split}
\chi^2 ( \gamma \mid \mu ) 
& \leq (2\pi\alpha)^{-n} \int_{\R^n} 
\e^{- \frac 1 \alpha \vert x-x_0\vert^2 + \phi (x) } \, dx \\ 
& \leq (2\pi\alpha)^{-n} \int_{\R^n} \e^{-\frac 1 \alpha \vert x-x_0\vert^2 + \phi (x_0) 
+ L \vert x-x_0\vert}  \, dx \\
& \leq (2\pi\alpha)^{-n} \int_{\R^n} \e^{-\frac 1 {2\alpha} \vert x-x_0\vert^2  
+ \phi (x_0) + \frac 12 L^2 \alpha} \, dx 
= (2\pi \alpha)^{-n/2}  \e^{ \phi(x_0) + \frac 12  L^2\alpha} .
\end{split} 
\]
Also, reasoning along the same lines as in the previous section 
we get 
\[
\phi ( x_0 ) \leq \min_{\R^n} \{ \phi \} + n \sigma_0 
\leq \frac n2 \log ( 2\pi \e ) + \frac n2 \log C_P + n \sigma_0  .
\]
Putting everything together and choosing $\alpha = n / L^2$ yields 
\[
\begin{split}
\log \chi^2 ( \gamma \mid \mu ) 
& \leq \frac n2 \left(-\log \alpha + 1 + \log C_P + 2 \sigma_0 \right)
+ \frac 12  L^2 \alpha \\
& = n \left( 1 + \sigma_0 + \frac 12 \log \left( \frac { L^2 C_P } n \right) \right) , 
\end{split} 
\]
which is the result. 
\end{proof}
\section{An extension to the non-globally Lipschitz case}
We begin this section with a simple lemma 
about the Wassertein distance of $\mu$ to $\mu$ restricted to a large ball. 
\begin{lem}
Let $\mu$ be a log-concave measure on $\R^n$, 
and let $\mu_R$ be the measure $\mu$ conditioned on the 
ball centered at $0$ of radius $R$.  
There exists a universal constant $C$ such that
\[
W_2^2 ( \mu , \mu_R ) \leq C M \exp \left( - \frac{ R }{ C \sqrt M} \right) , \quad \forall R \geq C \sqrt{M}
\]
where $M = \int_{\R^n} \vert x\vert^2 \, d \mu$.  
\end{lem}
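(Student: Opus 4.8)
The plan is to combine a cheap explicit coupling of $\mu$ with $\mu_R$ with the sub-exponential concentration of log-concave measures. Set $p=\mu(B_R)$, where $B_R$ denotes the ball centered at $0$ of radius $R$, and write $\mu = p\,\mu_R + (1-p)\,\nu$, where $\nu$ is the normalized restriction of $\mu$ to the complement $B_R^c$. I would use the coupling $(X,Y)$ which with probability $p$ draws $X\sim\mu_R$ and sets $Y=X$, and with probability $1-p$ draws $X\sim\nu$ and $Y\sim\mu_R$ independently. Then $X\sim\mu$ and $Y\sim\mu_R$, the two agree on the first event, and $|X-Y|^2\leq 2|X|^2+2|Y|^2$ in general, so
\[
W_2^2(\mu,\mu_R)\leq \E[|X-Y|^2]\leq 2(1-p)\,\E_\nu[|X|^2]+2(1-p)\,\E_{\mu_R}[|Y|^2]
=2\int_{B_R^c}|x|^2\,d\mu+\frac{2(1-p)}{p}\,M,
\]
using $(1-p)\,\E_\nu[|X|^2]=\int_{B_R^c}|x|^2\,d\mu$ and $\E_{\mu_R}[|Y|^2]=\tfrac1p\int_{B_R}|x|^2\,d\mu\leq M/p$. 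Hence everything reduces to showing that $\mu(B_R^c)$ and the tail integral $\int_{B_R^c}|x|^2\,d\mu$ both decay like $\e^{-cR/\sqrt M}$.

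For the mass of the tail, Markov's inequality gives $\mu(B_R^c)\leq M/R^2$, so $p\geq 3/4$ as soon as $R\geq 2\sqrt M$. This is the one place where log-concavity enters: the ball $B_{2\sqrt M}$ is a symmetric convex set of $\mu$-measure at least $3/4$, so Borell's lemma gives, for every $t\geq 1$,
\[
\mu(B_{2t\sqrt M}^c)\leq \tfrac{3}{4}\,(1/3)^{(t+1)/2}\leq 3^{-t/2},
\]
and therefore $\mu(B_R^c)\leq 3^{-R/(4\sqrt M)}=\e^{-cR/\sqrt M}$ for $R\geq 2\sqrt M$, with $c=(\log 3)/4$. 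It then remains to integrate this bound. By the layer-cake formula,
\[
\int_{B_R^c}|x|^2\,d\mu = R^2\,\mu(B_R^c)+\int_R^\infty 2t\,\mu(B_t^c)\,dt\leq R^2\,\e^{-cR/\sqrt M}+\int_R^\infty 2t\,\e^{-ct/\sqrt M}\,dt .
\]
Using the elementary inequality $s^2\e^{-cs}\leq C_c\,\e^{-cs/2}$ (with $s=R/\sqrt M$ in the first term, and inside the integral) and evaluating the remaining elementary integral, the right-hand side is at most $C\,M\,\e^{-cR/(2\sqrt M)}$ for a universal $C$. Feeding this and the bound $\tfrac{1-p}{p}\leq \tfrac43\,\e^{-cR/\sqrt M}$ into the coupling inequality yields $W_2^2(\mu,\mu_R)\leq C'M\,\e^{-c'R/\sqrt M}$ for all $R\geq 2\sqrt M$; renaming the constants so the hypothesis reads $R\geq C\sqrt M$ gives exactly the stated inequality.

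The only substantial ingredient is the sub-exponential concentration of log-concave measures, i.e.\ Borell's lemma; the coupling and the one-dimensional tail computation are routine, so I expect no real obstacle beyond invoking that classical fact correctly. If a fully self-contained write-up is desired, one may replace the appeal to Borell's lemma by a direct convexity argument for the function $t\mapsto 1-\mu(tB_{2\sqrt M})$, which is where the log-concavity of $\mu$ is genuinely used.
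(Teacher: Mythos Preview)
Your proof is correct and follows essentially the same approach as the paper: the coupling you describe is exactly the one the paper uses, and Borell's lemma is likewise the sole substantive ingredient. The only difference is cosmetic---the paper bounds $\E[|X-\tilde X|^2;\,|X|>R]$ via Cauchy--Schwarz and the fourth-moment estimate $\E[|X|^4]\leq C M^2$ (itself a consequence of Borell's lemma), whereas you expand $|X-Y|^2\leq 2|X|^2+2|Y|^2$ and handle the tail integral $\int_{B_R^c}|x|^2\,d\mu$ by the layer-cake formula; both are routine and yield the same conclusion.
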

\begin{proof}
Let $X$ have law $\mu$. Note that by Borell's lemma~\cite[Lemma 3.1]{borell}
we have $\prob ( \vert X\vert \geq t ) 
\leq \e^{ -t / C_0\sqrt M }$ for all $t \geq C_0 \sqrt M$ 
for some universal constant $C_0$. This also implies that 
$\E [ \vert X \vert^4 ] \leq C_1 M^2$ for some $C_1$. Now 
assume that $R \geq C_0 \sqrt M$, let $\tilde X$ have law $\mu_R$ and be independent 
of $X$ and let 
\[
Y = \begin{cases} X & \text{if } \vert X\vert \leq R \\
\tilde X & \text{otherwise} . 
\end{cases}
\]
Then $Y$ also has law $\mu_R$, so that 
\[
\begin{split}
W_2^2 ( \mu,\mu_R ) 
& \leq \E [ \vert X-Y\vert^2 ] = \E [ \vert X-\tilde X\vert^2 ; \vert X\vert > R ] \\
& \leq 4 \E [ \vert X\vert^4 ]^{1/2} \, \prob ( \vert X\vert > R )^{1/2} 
\leq C_1 M \e^{ -  R / (C_0 M^{1/2})} ,  
\end{split}
\]
which is the result. 
\end{proof}
\begin{proof}[Proof of Theorem~\ref{thm:largeball}]
Assuming that $\int_{\R^n} \vert x\vert^2 \, d\mu = O (n)$, 
the previous lemma shows that $\frac 1n W_2^2 ( \mu , \mu_R)$ will be negligible 
as soon as $R$ is a sufficiently large multiple of $\sqrt n \log n$. Now we apply 
Theorem~\ref{thm:logsob} to $\mu_R$ and initiating the Langevin algorithm at $0$. 
Then the parameter $r_0$ is of order $\sqrt n \log n$. Moreover the hypothesis 
\[
\vert \nabla \phi (x) \vert \leq \beta ( \vert x\vert + 1 ) 
\]
show that the potential of $\mu_R$ is Lipschitz 
with constant $O^*(\beta\sqrt n)$. Therefore the constant  
$A$ defined by~\eqref{eq:defA} satisfies $A = O^* ( \max (\beta,1) )$ in this 
case. On the other hand since $\mu_R$ is log-concave and supported on a ball of radius 
$O^*(\sqrt n)$ its log-Sobolev constant is $O^*(n)$ at most. Also, if $\mu$ 
satisfies log-Sobolev, then the log-Sobolev constant of $\mu_R$ cannot be larger 
than a constant factor times that of $\mu$. This follows easily from 
the fact that within log-concave measures log-Sobolev and 
Gaussian concentration are equivalent, see \cite[Theorem 1.2]{milman-duke}. 
To sum up, the log-Sobolev constant of $\mu_R$ is $O^* ( \min ( n ,C_{LS} ) )$
where $C_{LS}$ is the log-Sobolev constant of $\mu$ (which is possibly infinite). 
Applying Theorem~\ref{thm:logsob} we see that after
\[
k = \Theta^* \left( \frac{ \min ( C_{LS} , n )^3 \max ( \beta , 1 )^2 } {\epsilon^2} \right) 
\] 
of the Langevin algorithm for $\mu_R$ initiated at $0$, and with 
appropriate time step parameter we have $\frac 1n W_2^2 ( x_k , \mu_R ) \leq \epsilon$. 
Since $\frac 1n W_2^2 (\mu_R , \mu )$
is negligible this implies $\frac 1n W_2^2 (x_k,\mu)\leq 2\epsilon$.
\end{proof}

\end{document}